\documentclass[12pt]{amsart}

\usepackage{amssymb,amsbsy}
\usepackage{latexsym}
\usepackage{verbatim,color}
\usepackage{fullpage,euscript}

\usepackage{xspace}
\usepackage{xcolor}
\usepackage[colorlinks=true,linkcolor=blue,urlcolor=my_color,citecolor=magenta]{hyperref}

\definecolor{my_color}{rgb}{0.5,0.4,0.1}
\definecolor{MIXT}{rgb}{0.7,0.1,0.2}

\tolerance=4000

\numberwithin{equation}{section}


\input {cyracc.def}
 \font\tencyr=wncyr10 
\font\tencyi=wncyi10 
\font\tencysc=wncysc10 
\def\rus{\tencyr\cyracc}
\def\rusi{\tencyi\cyracc}
\def\rusc{\tencysc\cyracc}

\newtheorem{thm}{Theorem}[section]

\newtheorem*{qtn}{Problems}
\newtheorem{lm}[thm]{Lemma}
\newtheorem{cl}[thm]{Corollary}
\newtheorem{prop}[thm]{Proposition}

\theoremstyle{remark}
\newtheorem{rmk}[thm]{Remark}

\theoremstyle{definition}

\newtheorem{exs}[thm]{Examples}
\newtheorem{df}{Definition}


\newcommand {\ce}{{\mathfrak c}}

\newcommand {\g}{{\mathfrak g}}
\newcommand {\h}{{\mathfrak h}}

\newcommand {\ka}{{\mathfrak k}}
\newcommand {\el}{{\mathfrak l}}

\newcommand {\q}{{\mathfrak q}}
\newcommand {\rr}{{\mathfrak r}}

\newcommand {\te}{{\mathfrak t}}

\newcommand {\z}{{\mathfrak z}}


\newcommand {\sln}{{\mathfrak{sl}_n}}

\newcommand {\slno}{{\mathfrak{sl}_{n+1}}}

\newcommand {\spn}{{\mathfrak {sp}}_{2n}}

\newcommand {\son}{{\mathfrak{so}_n}}

\newcommand {\ca}{{\mathcal A}}

\newcommand {\cf}{{\mathcal F}}
\newcommand {\ck}{{\mathcal K}}

\newcommand {\cs}{{\mathcal S}}


\newcommand {\BZ}{{\mathbb Z}}
\newcommand {\BN}{{\mathbb N}}

\newcommand {\md}{/\!\!/}
\newcommand {\isom}{\stackrel{\sim}{\longrightarrow}}

\newcommand {\eus}{\EuScript}

\newcommand {\gN}{{\eus N}}

\newcommand {\zq}{{\eus Z(\q)}}
\newcommand {\zg}{{\eus Z(\g)}}
\newcommand {\zk}{{\eus Z(\ka)}}


\newcommand{\ap}{\alpha}

\renewcommand{\le}{\leqslant}
\renewcommand{\ge}{\geqslant}

\newcommand {\ad}{{\mathrm{ad\,}}}
\newcommand {\ads}{{\mathrm{ad}^*}}

\newcommand {\Ann}{{\mathrm{Ann\,}}}

\newcommand {\codim}{{\mathrm{codim\,}}}

\newcommand {\gr}{{\mathrm{gr\,}}}

\newcommand {\ind}{{\mathrm{ind\,}}}
\newcommand {\Lie}{{\mathrm{Lie\,}}}

\newcommand {\Mor}{\operatorname{Mor}}

\newcommand {\rk}{{\mathrm{rk\,}}}

\newcommand {\spe}{{\mathsf{Spec\,}}}

\newcommand {\trdeg}{{\mathrm{trdeg\,}}}

\newcommand {\beq}{\begin{equation}}
\newcommand {\eeq}{\end{equation}}

\newcommand {\tri}{\mathfrak{sl}_2}
\newcommand {\GR}[2]{{\textrm{{\bf #1}}}_{#2}}

\newcommand {\ov}{\overline}
\newcommand {\un}{\underline}

\newcommand {\bbk}{\Bbbk}
\newcommand {\sat}{\mathsf{Sat}(\g,\g_0)}
\newcommand {\sath}{\mathsf{Sat}(\h,\h_0)}


\begin{document}
\setlength{\parskip}{3pt plus 2pt minus 0pt}
\hfill {\scriptsize December 6, 2013}
\vskip1ex

\title[On maximal commutative subalgebras]
{On maximal commutative subalgebras of Poisson algebras associated with 
involutions of semisimple Lie algebras}
\author[D.\,Panyushev]{Dmitri I.~Panyushev}
\address[D.P.]%
{Institute for Information Transmission Problems of the R.A.S., 
\hfil\break\indent \quad Bol'shoi Karetnyi per. 19, Moscow 127994, Russia
}
\email{panyushev@iitp.ru}
\author[O.\,Yakimova]{Oksana S.~Yakimova}
\address[O.Y.]{Mathematisches Institut, Friedrich-Schiller-Universit\"at, \hfil\break\indent
\quad D-07737 Jena,  Deutschland}
\email{oksana.yakimova@uni-jena.de}
\subjclass[2010]{17B63, 17B70, 14L30, 17B08, 13A50}
\keywords{Lie-Poisson bracket, symmetric pair, isotropy representation, argument shift, $\BZ_2$-contraction}
\begin{abstract}
For any involution $\sigma$ of a semisimple Lie algebra $\g$, one constructs a non-reductive Lie 
algebra $\ka$, which is called a $\BZ_2$-contraction of $\g$. In this paper, we attack the problem of 
describing maximal commutative subalgebras of the Poisson algebra $\mathcal S(\ka)$. This is closely
related to the study of the coadjoint representation of $\ka$ and the set, $\ka^*_{reg}$, of the regular elements of $\ka^*$. By our previous results, in the context of $\BZ_2$-contractions,
the argument shift method provides maximal commutative 
subalgebras of $\mathcal S(\ka)$ whenever $\codim(\ka^*\setminus \ka^*_{reg})\ge 3$. 

Our main result here is that $\codim(\ka^*\setminus \ka^*_{reg})\ge 3$ if and only if the Satake diagram of
$\sigma$ has no trivial nodes. (A node is trivial, if it is white, has no arrows attached, and all adjacent 
nodes are also white.) The list of suitable involutions is provided. We also describe certain maximal commutative subalgebras of $\mathcal S(\ka)$
if the $(-1)$-eigenspace of $\sigma$ in $\g$ contains regular elements.
\end{abstract}
\maketitle

\section*{Introduction}

\noindent
Let $Q$ be a connected algebraic group, with Lie algebra
$\q$, over an algebraically closed field $\bbk$ of characteristic zero.
The symmetric algebra $\cs(\q)\simeq\bbk[\q^*]$ is equipped with the standard Lie-Poisson bracket $\{\ ,\ \}$, and 
the algebra of invariants $\cs(\q)^Q$ is the centre of $(\cs(\q), \{\ ,\ \})$. We say that a subalgebra 
$\ca\subset \cs(\q)$ is {\it commutative\/} if the bracket $\{\ ,\ \}$ vanishes on $\ca$. 
As is well known, a commutative subalgebra cannot have the transcendence degree larger than
$(\dim\q+\ind\q)/2$, where $\ind\q$ is the index of $\q$. If this bound is attained, then $\ca$ is said to be 
{\it of maximal dimension}. A commutative subalgebra $\ca$ is said to be {\it maximal\/}, 
if it is not contained in a larger commutative subalgebra of $\cs(\q)$.
It is shown in~\cite{mf3} that natural commutative subalgebras of $\cs(\q)$ can be constructed through
the use of $\cs(\q)^Q$ and any $\xi\in\q^*$. This procedure is known as the "argument shift 
method", see Section~\ref{subs:shift} for details. We write $\cf_\xi(\cs(\q)^Q)$ for the resulting commutative 
subalgebra of $\cs(\q)$.

It was proved in \cite{mf3} that if $\q=\g$ is semisimple and $\xi\in\g^*\simeq\g$ is regular 
semisimple, then $\cf_\xi(\cs(\g)^G)$ is of maximal dimension. (Later on, it was realised that these 
subalgebras are also maximal~\cite{tar1}.) Let $\q^*_{reg}$ be the set of $Q$-regular elements of 
$\q^*$. By~\cite{bol2}, if $\trdeg(\cs(\q)^Q)=\ind\q$ and 
$\codim (\q^*\setminus\q^*_{reg})\ge 2$, then $\cf_\xi(\cs(\q)^Q)$ is of maximal dimension for any 
$\xi\in\q^*_{reg}$. In \cite{do},  we extended this result by proving that the maximality of 
$\cf_\xi(\cs(\q)^Q)$  is related to the property that $\codim (\q^*\setminus\q^*_{reg})\ge 3$, 
see also Theorem~\ref{thm:cod3} for the precise statement.

An important class of non-reductive Lie algebras consists of $\BZ_2$-contractions of semi\-simple 
Lie algebras $\g$. A {\it $\BZ_2$-contraction} of $\g$ is the 
semi-direct product $\ka=\g_0\ltimes\g_1$, where $\g=\g_0\oplus\g_1$ is a $\BZ_2$-grading of $\g$ and $\g_1$ becomes an abelian ideal in $\ka$.
All $\BZ_2$-contractions $\ka$ satisfy the property that $\trdeg(\cs(\ka)^K)=\ind\ka$ and 
$\codim (\ka^*\setminus\ka^*_{reg})\ge 2$~\cite{coadj}.
In this paper, we attack the problem of describing maximal commutative subalgebras in $\eus S(\ka)$. 
Our first approach is to verify when Theorem~\ref{thm:cod3} applies to $\ka$.  
To this end, it suffices to check that $\ka^*\setminus\ka^*_{reg}\ge 3$.
Our main result is that such ``{\sl codim}--3 property''  
can be characterised in terms of the Satake diagram of the corresponding
involution of $\g$. Namely,  $\ka=\g_0\ltimes\g_1$ has the {\sl codim}--3 property
if and only if each white node of the Satake diagram has either a black adjacent node or an arrow 
attached (Theorem~\ref{thm:main}). See also Table~\ref{table:all} for the list of relevant involutions and 
Satake diagrams. Thus, for those $\BZ_2$-contractions, the commutative subalgebras
$\cf_\xi(\cs(\ka)^K)$, $\xi\in\ka^*_{reg}$, are maximal. 
Quite a different approach works if $\g_1$ contains a regular nilpotent element of $\g$. 
Here we prove that $\cs(\ka)^{\g_1}$ is a maximal commutative subalgebra (Theorem~\ref{thm:N-reg}). 
An interesting feature is that, for all cases, both constructions provide maximal commutative 
subalgebras of $\eus S(\ka)$ that are polynomial. 
Unfortunately, these results do not cover all $\BZ_2$-contractions. On the other hand, 
there is an involution of $\g=\mathfrak{sl}_{2n+1}$, with 
$\g_0=\mathfrak{sl}_{n}\dotplus\mathfrak{sl}_{n+1}\dotplus\te_1$,  where both approaches apply and
the resulting commutative subalgebras appear to be rather different.

In Section~\ref{prelim}, we gather basic facts on coadjoint representations and commutative 
subalgebras of $\eus S(\q)$, including our sufficient condition for the maximality of subalgebras of the 
form $\cf_\xi(\cs(\q)^Q)$.  Necessary background 
on the isotropy representations of symmetric spaces and Satake diagrams is presented in 
Section~\ref{sect:sym-pairs}. In Section~\ref{sect:new}, we recall basic properties of $\BZ_2$-contractions 
and prove that the subalgebra $\eus S(\ka)^{\g_1}$ is maximal and 
commutative if and only if $\g_1$ contains a regular nilpotent element of $\g$. Section~\ref{sect:codim3} is devoted to our characterisation of the involutions of semisimple Lie algebras having the property that
$\codim (\ka^*\setminus\ka^*_{reg})\ge 3$. Finally, in Section~\ref{sect:finish}, we summarise our knowledge on maximal commutative subalgebras of $\BZ_2$-contractions and pose open problems.

{\sl \un{Notation}.}
If 
$Q$ acts on an irreducible affine variety $X$, then $\bbk[X]^Q$ 
is the algebra of $Q$-invariant regular functions on $X$ and $\bbk(X)^Q$
is the field of $Q$-invariant rational functions. If $\bbk[X]^Q$
is finitely generated, then $X\md Q:=\spe \bbk[X]^Q$, and
the {\it quotient morphism\/} $\pi: X\to X\md Q$ is the mapping associated with
the embedding $\bbk[X]^Q \hookrightarrow \bbk[X]$.

If $V$ is a $Q$-module and $v\in V$, then $\q_v$ is the stabiliser of 
$v$ in $\q$.
We write $\te_n$ for the Lie algebra of an $n$-dimensional torus and 
use `$\dotplus$' to denote the direct sum of Lie algebras.

\section{Preliminaries on the coadjoint representation and commutative subalgebras} 
\label{prelim}

\noindent
Let $Q$ be an affine algebraic group with Lie algebra $\q$. If $Q$ acts regularly on an irreducible
algebraic variety $X$, then we also write $(Q:X)$ for this.
Let $X_{reg}$ be the set of $Q$-{\it regular\/} (= $\q$-{\it regular}) elements of $X$. That is,
\begin{multline*}
   X_{reg}:=
   \{ x\in X\mid \dim Q{\cdot}x \ge \dim Q{\cdot}x' \text{ for all } x' \in X\}= \\
    \{ x\in X\mid \dim \q_x \le \dim \q_{x'} \text{ for all } x' \in X\} \ .
\end{multline*}
As is well-known, $X_{reg}$ is a dense open subset of $X$, hence $\codim_X (X\setminus X_{reg})\ge 1$.

\begin{df}   \label{def:codim-n}
We say that $(Q:X)$ has the {\it codim--$n$ property\/} if
$\codim_X (X\setminus X_{reg})\ge n$.
\end{df}

\subsection{The coadjoint representation} Henceforth, we assume that $Q$ is connected.
There are two natural representations ($Q$-modules) associated with $Q$, the adjoint and 
coadjoint ones. Accordingly, we write $\q^*_{reg}$ for the set of $Q$-regular elements of $\q^*$,
with respect to the coadjoint representation. 

\begin{df}   \label{def:codim2}
We say that  $\q$ has the {\it codim--$n$ property\/} if $\codim (\q^*\setminus \q^*_{reg})\ge n$, 
i.e., if the {\bf coadjoint} representation (action) of $Q$ has the {\sl codim}--$n$ property.
\end{df}

This notion will mostly be used with $n=2$ or $3$.  

\begin{exs}  \label{ex:codim3-red}
1) If $\q$ is semisimple, then $\ad\simeq\ads$ and $\codim (\q\setminus \q_{reg})=3$, see \cite{ko63}. 
If $\q$  is toral (= Lie algebra of a torus), then $\q^*_{reg}=\q^*$. This also implies that  
all reductive Lie algebras have the {\sl codim}--$3$ property. (We assume that $\codim(\varnothing)=-\infty$.)
\\  \indent
2)  If $\q$ is abelian, then $\q^*_{reg}=\q^*$ and thereby
$\q$ has the {\sl codim}--$n$ property for all $n$.
\\  \indent
3)  For each $n\in\BN$ there exist non-commutative Lie algebras with {\sl codim}--$n$ property 
\cite[Example\,1.1]{do}.
\end{exs}

If $\xi\in \q^*_{reg}$, then $\dim\q_\xi$ is called the {\it index\/} of $\q$, denoted $\ind\q$. In other words, $\ind\q$ is the minimal codimension of $Q$-orbits in $\q^*$.
By Rosenlicht's theorem, we have $\trdeg\bbk(\q^*)^Q=\ind\q$. In particular, 
$\trdeg\bbk[\q^*]^Q\le \ind\q$. Set $b(\q)=(\dim\q+\ind\q)/2$.
If $\q$ is semisimple, then $b(\q)$ is the dimension of  a Borel subagebra.

Let $\cs(\q)\simeq \bbk[\q^*]$ be the symmetric algebra of $\q$
(= the algebra of polynomial functions on $\q^*$).
For $f\in \cs(\q)$,  the differential of $f$, $\textsl{d}f$, is a  
polynomial mapping from $\q^*$ to $\q$, i.e., an element of
$\Mor(\q^*,\q)\simeq \cs(\q)\otimes \q$. 
More precisely, if $\deg f=d$ (i.e., $f\in \cs^d(\q)$), then
$\textsl{d}f\in \cs^{d-1}(\q)\otimes \q$. 
We write $(\textsl{d}f)_\xi$ for 
the value of $\textsl{d}f$ at $\xi\in\q^*$, and
the element $(\textsl{d}f)_\xi\in \q$ is defined as follows. 
If $\nu\in \q^*$ and $\langle\ ,\ \rangle$ is the natural pairing
between $\q$ and $\q^*$, then 
\[
\langle (\textsl{d}f)_\xi,\nu\rangle:= \text{the coefficient of
$t$ in the Taylor expansion of $f(\xi+t\nu)$}.
\] 
The Lie-Poisson bracket in $\cs(\q)$ is defined by  
$\{f_1,f_2\}(\xi)=\langle [(\textsl{d}f_1)_\xi, (\textsl{d}f_2)_\xi], \xi\rangle$ for $\xi\in\q^*$.
Since $Q$ is connected, the algebra of invariants
$\cs(\q)^Q=\cs(\q)^\q$ is the centre of $(\cs(\q), \{\ ,\ \})$.
We also write $\eus Z(\q)$ for the Poisson centre of $\cs(\q)$.

{\bf Warning.} The symbols $\cs(\q)^Q$, $\zq$, and $\bbk[\q^*]^Q$ refer to one and the same algebra.
But we prefer to use $\cs(\q)^Q$ and $\zq$ (resp. $\bbk[\q^*]^Q$) in the Poisson-related 
(resp. invariant-theoretic) context.

From the invariant-theoretic point of view, the usefulness of the {\sl codim}--$2$ property 
is clarified by the following result, see \cite[Theorem\,1.2]{coadj}.

\begin{thm}  \label{thm:cod2}
Suppose that $\q$ has the {\sl codim}--$2$ property
and $\trdeg \bbk[\q^*]^Q=\ind\q$. Set  $l=\ind\q$. 
Let $f_1,\dots,f_l\in \bbk[\q^*]^Q$ be arbitrary
homogeneous algebraically independent polynomials. Then 
\begin{itemize}
\item[\sf (i)] \ $\sum_{i=1}^l\deg f_i \ge b(\q)$; 
\item[\sf (ii)] \  If \  $\sum_{i=1}^l\deg f_i = b(\q)$, then
\begin{itemize}
\item \ $\bbk[\q^*]^Q$ is freely generated by $f_1,\dots,f_l$ and
\item \ $\xi\in\q^*_{reg}$ if and only if $(\textsl{d}f_1)_\xi,\dots,(\textsl{d}f_l)_\xi$ are linearly
independent.
\end{itemize}
\end{itemize}
\end{thm}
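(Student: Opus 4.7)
The proof rests on three ingredients: invariant differentials land in the coadjoint stabiliser; a \emph{fundamental semi-invariant} $\mathbf{p}_\q\in\cs(\q)$ of $\q$ divides the top wedge of these differentials; and the codim--$2$ hypothesis eliminates $\mathbf{p}_\q$. First, the standard observation: for any $f\in\cs(\q)^Q$ and $\xi\in\q^*$, the element $(\textsl{d}f)_\xi\in\q$ lies in the stabiliser $\q_\xi$, since differentiating $f(\Ads_{\exp(tx)}\xi)=f(\xi)$ at $t=0$ yields $\xi([x,(\textsl{d}f)_\xi])=0$ for every $x\in\q$. For $\xi\in\q^*_{reg}$ we have $\dim\q_\xi=l$, so the $l$ differentials $(\textsl{d}f_i)_\xi$ sit inside an $l$-dimensional subspace of $\q$ and are either linearly independent (and then span $\q_\xi$) or linearly dependent.

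Next, I form the Jacobian
\[
\omega := \textsl{d}f_1\wedge\cdots\wedge\textsl{d}f_l \in \cs(\q)\otimes\Lambda^l\q,
\]
which is nonzero by algebraic independence of the $f_i$ and homogeneous of degree $d-l$, where $d=\sum\deg f_i$. Let $\mathbf{p}_\q\in\cs(\q)$ be the fundamental semi-invariant of $\q$: the greatest common divisor of the Pfaffians of all $(\dim\q-l)\times(\dim\q-l)$ principal submatrices of the Kirillov matrix $\bigl(\xi([e_i,e_j])\bigr)_{i,j}$; its zero locus is precisely the codim--$1$ part of $\q^*\setminus\q^*_{reg}$. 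A Rais-type identity then produces a factorisation $\omega=\mathbf{p}_\q\cdot\widetilde\omega$ in $\cs(\q)\otimes\Lambda^l\q$ together with a sharp degree bound
\[
\deg\widetilde\omega \ge b(\q)-l,
\]
with equality characterising the simultaneous conditions that $\{f_1,\dots,f_l\}$ freely generate $\cs(\q)^Q$ and that the Kostant-type criterion ``$\xi\in\q^*_{reg}\Leftrightarrow\omega(\xi)\neq 0$'' holds.

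The codim--$2$ hypothesis says $\q^*\setminus\q^*_{reg}$ has no codim--$1$ components, so $\mathbf{p}_\q$ is a nonzero constant and $\deg\mathbf{p}_\q=0$; hence $d-l=\deg\omega=\deg\widetilde\omega\ge b(\q)-l$, proving (i). In the equality case $d=b(\q)$ the bound above is saturated, which simultaneously delivers both claims of (ii): free generation of $\cs(\q)^Q$ by the $f_i$, and the pointwise equivalence of coadjoint regularity with the linear independence of $(\textsl{d}f_1)_\xi,\dots,(\textsl{d}f_l)_\xi$.

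The main obstacle is the Rais-type factorisation $\omega=\mathbf{p}_\q\cdot\widetilde\omega$ with its sharp degree bound and equality case. This is a delicate piece of invariant-theoretic bookkeeping relating Pfaffian strata of the Kirillov form to vanishing loci of wedges of invariant differentials; it generalises Kostant's classical theorems from the semisimple setting to arbitrary finite-dimensional Lie algebras, and is precisely where the structure of the coadjoint representation enters in full strength.
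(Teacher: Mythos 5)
Note first that the paper contains no proof of Theorem~\ref{thm:cod2}: it is quoted verbatim from \cite[Theorem~1.2]{coadj}, so your attempt must be measured against the argument there. Your skeleton matches that argument — invariant differentials lie in $\q_\xi$, one studies $\omega=\textsl{d}f_1\wedge\cdots\wedge\textsl{d}f_l$, and the {\sl codim}--$2$ hypothesis is what neutralises the fundamental semi-invariant — but the entire mathematical content is concentrated in the step you yourself flag as ``the main obstacle'': the bound $\deg\widetilde\omega\ge b(\q)-l$ and its equality case are asserted, not proved, so nothing is actually established. The missing idea is the explicit comparison object of degree $b(\q)-l$: the top nonzero wedge power $\ck_\xi^{\wedge m}$, $m=b(\q)-l$, of the Kirillov form, i.e.\ the vector $\tau$ of Pfaffians of the principal $(\dim\q-l)$-minors of the Kirillov matrix, regarded as an element of $\cs(\q)\otimes\Lambda^{l}\q$ via $\Lambda^{\dim\q-l}\q^*\simeq\Lambda^{l}\q\otimes\Lambda^{\dim\q}\q^*$. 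For $\xi\in\q^*_{reg}$ with $(\textsl{d}f_i)_\xi$ independent, both $\tau(\xi)$ and $\omega(\xi)$ span the line $\Lambda^l\q_\xi$, so $q\,\omega=p\,\tau$ for coprime polynomials $p,q$; the {\sl codim}--$2$ property says the common zero locus of the Pfaffians, which is exactly $\q^*\setminus\q^*_{reg}$, has codimension $\ge 2$, hence the coefficients of $\tau$ have trivial gcd (${\bf p}_\q$ is constant) and $q$ must be constant. Comparing degrees gives $d-l\ge b(\q)-l$, and equality forces $\omega$ to be a scalar multiple of $\tau$, whose nonvanishing locus is $\q^*_{reg}$. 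This comparison is the heart of the proof and is absent from your write-up.

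Two further defects. Your claimed general factorisation $\omega={\bf p}_\q\cdot\widetilde\omega$ is false: for the $3$-dimensional Heisenberg algebra with $f_1=z$ central, $\omega=(\textsl{d}z)_\xi=z$ is a nonzero \emph{constant} element of $\q$, while ${\bf p}_\q=z$ has degree $1$; consistently, the correct general inequality is $\sum\deg f_i\ge b(\q)-\deg{\bf p}_\q$, with the semi-invariant entering with the sign opposite to what your factorisation would produce. This happens to be harmless here, since you only invoke it when ${\bf p}_\q$ is constant. Finally, the first bullet of (ii) does not ``fall out'' of the saturated degree bound: knowing that the zero locus of $\omega$ has codimension $\ge 2$, one must still apply the standard criterion ensuring that $\bbk[f_1,\dots,f_l]$ is algebraically closed in $\bbk[\q^*]$, and combine this with $\trdeg\bbk[\q^*]^Q=l$ to conclude $\bbk[\q^*]^Q=\bbk[f_1,\dots,f_l]$; that step is not addressed in your proposal.
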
%
\noindent
The second assertion in (ii) is a generalisation of Kostant's result for
reductive Lie algebras \cite[(4.8.2)]{ko63}.

\subsection{Commutative subalgebras of $\cs(\q)$}  \label{subs:shift}
Let $\eus A$ be a subalgebra of the symmetric algebra $\cs(\q)$. Then $\eus A$ is said to be 
{\it commutative} if the restriction of $\{\ ,\ \}$ to $\eus A$ is zero. 

By definition, the transcendence degree of $\eus A$ is that of the quotient field of $\eus A$.
It is well-known that if $\eus A$ is commutative, then
$\trdeg \eus A\le b(\q)$. Indeed, if $f_1,\dots, f_n\in \eus A$ are algebraically independent, then 
for a generic $\xi\in\q^*_{reg}$, the linear span of
$(\textsl{d}f_1)_\xi,\dots,(\textsl{d}f_n)_\xi$ is $n$-dimensional, and it is an isotropic space with respect to 
the Kirillov-Kostant form ${\mathcal K}_\xi$ on $\q$.
(Recall that $\ck_\xi(x,y):=\langle \xi, [x,y]\rangle$ and hence $\dim(\ker\ck_\xi)=\dim\q_\xi=\ind\q$.)

A commutative subalgebra of $\cs(\q)$ is said to be {\it of maximal dimension}, if its 
transcendence degree equals $b(\q)$. A commutative subalgebra of $\cs(\q)$ is  {\it 
maximal}, if it is maximal with respect to inclusion among all commutative subalgebras. It is known 
that commutative subalgebras of maximal dimension always exist, see \cite{sadetov}. 
It is plausible that if $\q$ is algebraic and $\trdeg\eus Z(\q)= \ind\q$, then any maximal commutative 
subalgebra of $\cs(\q)$ is of maximal dimension. 
(In \cite[2.5]{fonya09}, Ooms provides an example of a maximal commutative subalgebra of $\cs(\q)$ that is not of maximal dimension. In his example, $\q$ is not algebraic, which can easily be mended. However,  even after that modification one still has $\trdeg\eus Z(\q)< \ind\q$.)

Let $f\in\cs(\q)$ be a  polynomial of degree $d$.
For  $\xi\in\q^*$, consider a shift of $f$ in direction $\xi$:
$f_{a,\xi}(\mu)=f(\mu+a\xi)$, where $a\in \bbk$.
Expanding the right hand side as polynomial in $a$, we obtain the expression
$f_{a,\xi}(\mu)=\sum_{j=0}^d f_{\xi}^j(\mu)a^j$ and
the family of polynomials
$ f_{\xi}^j$, where $j=0,1,\dots,d-1$. (Since $\deg f_{\xi}^j=d-j$, the value
$j=d$ is not needed.)
Notice that $f_\xi^0=f$ and $f_\xi^{d-1}$ is a linear form on $\q^*$, i.e.,
an element of $\q$. Actually, $f_\xi^{d-1}=(\textsl{d}f)_\xi$.
There is also an obvious symmetry with respect to $\xi$ and $\mu$: \ 
$f_\xi^j(\mu)=f_\mu^{d-j}(\xi)$.

The following observation is due to Mishchenko--Fomenko \cite{mf3}.

\begin{lm}     \label{mf:shift}
Suppose that $h_1,\dots, h_m\in \eus Z(\q)$ are homogeneous. 
Then for any $\xi\in\q^*$, all the polynomials
$
  \{(h_i)_{\xi}^j \mid i=1,\dots,m;\ j=0,1,\dots,\deg h_i-1\}$
pairwise commute with respect to the Lie--Poisson bracket.
\end{lm}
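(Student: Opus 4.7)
The plan is to deduce everything from a single universal identity obtained by a double argument shift. First, I would translate the hypothesis $h\in\zq$ into the pointwise statement that $(\textsl{d}h)_\eta\in\q_\eta$ for every $\eta\in\q^*$; equivalently, $\langle\eta,[(\textsl{d}h)_\eta,z]\rangle=0$ for all $z\in\q$. This follows because $\{h,f\}(\eta)=\langle\eta,[(\textsl{d}h)_\eta,(\textsl{d}f)_\eta]\rangle$ must vanish for every $f\in\cs(\q)$, and the values $(\textsl{d}f)_\eta$ exhaust $\q$ as $f$ varies.

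Next, for fixed $i,j$ and scalars $a,b\in\bbk$, I would consider the shifted polynomials $P(\mu)=h_i(\mu+a\xi)$ and $Q(\mu)=h_j(\mu+b\xi)$ and compute their Lie--Poisson bracket at a point $\mu\in\q^*$:
\[
\{P,Q\}(\mu)=\langle\mu,[X,Y]\rangle,\qquad X:=(\textsl{d}h_i)_{\mu+a\xi},\ \ Y:=(\textsl{d}h_j)_{\mu+b\xi},
\]
by the chain rule for differentials. Applying the centrality of $h_i$ at $\eta=\mu+a\xi$ with $z=Y$ yields $\langle\mu+a\xi,[X,Y]\rangle=0$; applying the centrality of $h_j$ at $\eta'=\mu+b\xi$ with $z=X$ yields $\langle\mu+b\xi,[X,Y]\rangle=0$. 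Subtracting gives $(a-b)\langle\xi,[X,Y]\rangle=0$, so whenever $a\ne b$ we get $\langle\xi,[X,Y]\rangle=0$ and hence also $\langle\mu,[X,Y]\rangle=0$. Thus $\{P,Q\}\equiv 0$ as a polynomial in $\mu$ whenever $a\ne b$.

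Finally, I would extract coefficients. Writing $h_i(\mu+a\xi)=\sum_r(h_i)_\xi^r(\mu)a^r$ and $h_j(\mu+b\xi)=\sum_s(h_j)_\xi^s(\mu)b^s$ and invoking bilinearity of the bracket,
\[
\{P,Q\}(\mu)=\sum_{r,s} a^r b^s \{(h_i)_\xi^r,(h_j)_\xi^s\}(\mu).
\]
This is a polynomial in $(a,b)$ that vanishes on the Zariski-dense subset $\{a\ne b\}\subset\bbk^2$, hence every coefficient vanishes identically. Letting $r$ range over $0,\dots,\deg h_i-1$ and $s$ over $0,\dots,\deg h_j-1$ (the remaining shift coefficients being zero), one obtains the asserted pairwise commutativity of the entire family.

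There is no serious obstacle: the only subtle point is the passage from vanishing on $\{a\ne b\}$ to coefficient-wise vanishing, which is a standard polynomial identity argument. The crux of the calculation is the symmetric use of the Poisson-centre condition at the two distinct points $\mu+a\xi$ and $\mu+b\xi$, whose difference $(a-b)\xi$ is precisely what allows $\xi$ itself to be eliminated from the bracket.
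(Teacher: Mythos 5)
Your proof is correct; the paper itself offers no proof of this lemma, simply attributing it to Mishchenko--Fomenko \cite{mf3}, and your double-shift argument (using centrality of $h_i$ at $\mu+a\xi$ and of $h_j$ at $\mu+b\xi$, subtracting to eliminate $\xi$, then extracting coefficients of $a^rb^s$) is the standard one. The only cosmetic slip is the parenthetical ``the remaining shift coefficients being zero'': the top coefficient $(h_i)_\xi^{\deg h_i}$ is the constant $h_i(\xi)$, not zero, but constants Poisson-commute with everything, so nothing is affected.
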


This procedure has been used for constructing
commutative subalgebras of maximal dimension in $\cs(\q)$.
Given $\xi\in\q^*$ and an arbitrary subset $\eus B\subset \zq$,
let  $\cf_\xi(\eus B)$ denote the subalgebra of $\cs(\q)$ generated 
by the $\xi$-shifts of all elements of $\eus B$. If $\hat{\eus B}$ is the subalgebra
generated by $\eus B$, then $\cf_\xi(\eus B)=\cf_\xi(\hat{\eus B})$.
By Lemma~\ref{mf:shift}, any algebra $\cf_\xi(\eus B)$ is commutative.
In particular, algebras $\cf_\xi(\zq)$   are natural candidates on
the r\^ole of commutative subalgebras of maximal dimension.

For semisimple $\g$, it is proved in \cite{mf3} that $\cf_\xi(\zg)$ is of maximal dimension
whenever $\xi\in\g^*\simeq\g$ is regular semisimple.
A general sufficient condition for $\cf_\xi(\zq)$ to be
of maximal dimension is found by Bolsinov~\cite[Theorem~3.1]{bol2}. 
In \cite{do}, we have generalised these results and obtained a sufficient condition for $\cf_\xi(\zq)$ 
to be maximal:

\begin{thm}[see {\cite[Theorem\,3.2]{do}}]               \label{thm:cod3}
Let $\q$ be an algebraic Lie algebra.
\begin{itemize}
\item[\sf (i)] \ Suppose that $\q$ has the {\sl codim}--$2$ property
and $\eus Z(\q)$ contains algebraically independent polynomials
$f_1,\ldots,f_l$, where $l=\ind\q$, such that $\sum_{i=1}^l \deg f_i=b(\q)$.
Then $\cf_\xi(\eus Z(\q))$ is a {\bf polynomial} algebra of Krull dimension $b(\q)$ for any $\xi\in \q^*_{reg}$. 
(Hence $\cf_\xi(\zq)$ is a polynomial commutative subalgebra of maximal dimension.)

\item[\sf (ii)] \ Moreover, if $\q$ has the {\sl codim}--$3$ property, then 
$\cf_\xi(\eus Z(\q))$ is a {\bf maximal} commutative subalgebra of $\cs(\q)$.
\end{itemize}
\end{thm}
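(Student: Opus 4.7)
My plan is to prove (i) and (ii) in turn, using Theorem~\ref{thm:cod2} as the main engine for (i) and invoking the codim-3 hypothesis only in (ii). By Theorem~\ref{thm:cod2}, the hypotheses of (i) already force $\zq=\bbk[f_1,\ldots,f_l]$ to be a polynomial ring and pin down $\q^*_{reg}$ as the locus where $(\textsl{d}f_1)_\xi,\ldots,(\textsl{d}f_l)_\xi$ are linearly independent. Now $\cf_\xi(\zq)$ is generated as an algebra by the $\xi$-shifts $(f_i)_\xi^j$ with $1\le i\le l$, $0\le j\le\deg f_i-1$, a total of $\sum_i\deg f_i=b(\q)$ generators that pairwise Poisson-commute by Lemma~\ref{mf:shift}. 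To show these generators are algebraically independent I would follow the Bolsinov computation of \cite[Theorem~3.1]{bol2}: the symmetry $(f_i)_\xi^j(\mu)=(f_i)_\mu^{\deg f_i-j}(\xi)$ turns the differential of a shift at $\mu$ into a coefficient in the Taylor expansion of $\textsl{d}f_i$ along the line $\mu+t\xi$; a Kronecker--Jordan analysis of the pencil of skew forms $\ck_\mu+t\ck_\xi$ on $\q$ combined with the codim-2 property then produces a point of $\q^*_{reg}$ at which the differentials of the $b(\q)$ shifts span a $b(\q)$-dimensional subspace of $\q$. Since a $b(\q)$-generated commutative subalgebra has transcendence degree at most $b(\q)$, this forces it to be polynomial of Krull dimension $b(\q)$, proving (i).

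For (ii), let $\ca'\supset\cf_\xi(\zq)$ be a commutative subalgebra and pick $h\in\ca'$. Part (i) gives $\trdeg\cf_\xi(\zq)=b(\q)$, which is already the maximum possible for a commutative subalgebra, so $h$ is algebraic over the fraction field of $\cf_\xi(\zq)$. On $\q^*_{reg}$, the common level sets of the $b(\q)$ shift-generators are smooth subvarieties of dimension $\dim\q-b(\q)$, and the Poisson-commutation $\{h,(f_i)_\xi^j\}=0$ forces $h$ to be constant on each such level set. Hence $h$ descends to a rational function on the image of the induced quotient map, and thus $h$ lies in the intersection of $\cs(\q)$ with the fraction field of $\cf_\xi(\zq)$.

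The main obstacle, and the step where the codim-3 hypothesis is essential, is the upgrade from this rational statement to the integral one $h\in\cf_\xi(\zq)$. Concretely, one reduces to showing that $\cf_\xi(\zq)$ is algebraically closed in $\cs(\q)$. The minimal polynomial of $h$ over $\cf_\xi(\zq)$ might in principle have a discriminant whose vanishing supports divisorial components inside $\q^*\setminus\q^*_{reg}$; this is exactly what codim-3 excludes, since a closed subset of codimension $\ge 3$ contains no divisor. A Hartogs-type extension across $\q^*\setminus\q^*_{reg}$ then forces $h$ to be regular in the generators of $\cf_\xi(\zq)$, i.e.\ $h\in\cf_\xi(\zq)$, proving maximality. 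This purity argument is the technical heart of \cite[Theorem~3.2]{do} and explains why the strengthening from codim-2 to codim-3 is needed.
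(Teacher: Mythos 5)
Note first that this theorem is not proved in the present paper: it is quoted from \cite[Theorem\,3.2]{do}, so the comparison below is with the argument given there. Your treatment of (i) — Bolsinov's pencil computation producing one point at which the $b(\q)$ shifts $(f_i)_\xi^j$ have independent differentials, hence $\trdeg\cf_\xi(\zq)=b(\q)$, hence polynomiality because the algebra is generated by exactly $b(\q)$ elements — is essentially the argument of \cite{bol2} and \cite{do}, and is fine.

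Part (ii), however, contains a genuine gap, concentrated in two linked assertions. First, the claim that on $\q^*_{reg}$ the common level sets of the shift-generators are smooth of dimension $\dim\q-b(\q)$ is false: regularity of $\mu$ alone does not make the differentials $(\textsl{d}(f_i)_\xi^j)_\mu$ independent. By Bolsinov's criterion, the span $\{(\textsl{d}F)_\mu : F\in\cf_\xi(\zq)\}$ has dimension $b(\q)$ (equivalently, is a maximal isotropic subspace of $(\q,\ck_\mu)$) precisely when the whole line $\mu+\bbk\xi$ lies in $\q^*_{reg}$; the degeneracy locus is therefore contained in $(\q^*\setminus\q^*_{reg})+\bbk\xi$, not in $\q^*\setminus\q^*_{reg}$. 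Second, and as a consequence, your account of where the {\sl codim}--$3$ hypothesis enters cannot be right: you invoke it only to exclude divisorial components inside $\q^*\setminus\q^*_{reg}$, but that exclusion is already guaranteed by the {\sl codim}--$2$ hypothesis of part (i). If your argument were sound, maximality would follow from {\sl codim}--$2$ alone, contradicting the counterexample recalled in Section~\ref{sect:finish} and \cite[Example\,4.1]{do}: for involutions of maximal rank, $\ka$ has the {\sl codim}--$2$ property yet $\cf_\xi(\zk)$ is not maximal. The actual mechanism is that $\codim(\q^*\setminus\q^*_{reg})\ge 3$ forces $\codim\bigl((\q^*\setminus\q^*_{reg})+\bbk\xi\bigr)\ge 2$, so the differentials of the generators of $\cf_\xi(\zq)$ span a Lagrangian subspace of $(\q,\ck_\mu)$ outside a set of codimension $\ge 2$; for $h$ Poisson-commuting with $\cf_\xi(\zq)$, the element $(\textsl{d}h)_\mu$ lies in the $\ck_\mu$-orthogonal of that Lagrangian, i.e.\ in the span of the $(\textsl{d}(f_i)_\xi^j)_\mu$ themselves, off a set of codimension $2$; then the standard lemma that a polynomial subalgebra whose generators have independent differentials off a codimension-$2$ set is algebraically closed in $\bbk[\q^*]$ — your Hartogs step, applied to the correct bad set — yields $h\in\cf_\xi(\zq)$. (A smaller issue: ``$h$ is constant on level sets, hence descends to a rational function of the generators'' needs the generic fibre to be irreducible; the differential formulation just described avoids this.)
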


\noindent
In the rest of the paper, we consider a special class of non-reductive Lie algebras, the so-called
$\BZ_2$-{\it contractions\/} of semisimple algebras. These algebras always satisfy the conditions
stated in Theorem~\ref{thm:cod3}(i), see Theorem~\ref{thm:summary-z2} below. Therefore, one of our objectives is to study the {\sl codim}--$3$ property 
for them.

\section{Symmetric pairs, isotropy representations, and Satake diagrams}
\label{sect:sym-pairs}

\noindent
Let $\g$ be a semisimple Lie algebra and $\sigma$ an involutory automorphism of $\g$.
Let $\g_i$ denote the $(-1)^i$-eigenspace of $\sigma$.
Then  $\g_0$ is a reductive subalgebra and
$\g_1$ is an {\bf orthogonal} $\g_0$-module. We also say that $(\g,\g_0)$ is
a {\it symmetric pair} and $\g=\g_0\oplus\g_1$ is a 
$\BZ_2$-{\it grading\/} of $\g$.  Let $G$ be the adjoint group of $\g$
and $G_0$ the connected subgroup of $G$ with $\Lie G_0=\g_0$.
The representation $(G_0:\g_1)$ is  the {\it isotropy representation\/} 
of the symmetric pair $(\g,\g_0)$.

Below we introduce some notation and recall basic invariant-theoretic properties of 
the representation $(G_0:\g_1)$. The standard reference for this is \cite{kr71}.
Let $\gN$ denote the variety of nilpotent elements of $\g$. 

$(\boldsymbol{\dagger}_1)$ \ For any $v\in\g_1$ and the induced $\BZ_2$-grading  $\g_v=\g_{0,v}\oplus \g_{1,v}$, one has 
\beq   \label{eq:prop5}
\dim\g_0-\dim\g_{0,v}=\dim\g_1-\dim\g_{1,v} .
\eeq
The closure of $G_0{\cdot}v$ contains the origin if and only if $v\in\gN$; and
$G_0{\cdot}v$ is closed if and only if $v$ is semisimple. Write $G_{0,v}$ for the stabiliser of $v$ in
$G_0$, which is not necessarily connected.

$(\boldsymbol{\dagger}_2)$ \ Let $\ce\subset\g_1$  be a maximal subspace consisting of pairwise 
commuting semisimple elements. Any such subspace is called a {\it Cartan
subspace\/}. All Cartan subspaces are $G_0$-conjugate and $G_0{\cdot}\ce$ is dense in $\g_1$; 
$\dim\ce$ is called the {\it rank\/} of the $\BZ_2$-grading or pair $(\g,\g_0)$, denoted
$\rk(\g,\g_0)$.  If $h\in\ce$ is $G_0$-regular in $\g_1$, then $\g_{1,h}=\ce$ and $\g_{0,h}$ is  
the centraliser of $\ce$ in $\g_{0}$. We also write $\rr=\z(\ce)_0$ for this centraliser.

$(\boldsymbol{\dagger}_3)$ \  The algebra $\bbk[\g_1]^{G_0}$ is polynomial and 
$\dim\g_1\md G_0=\rk(\g,\g_0)$.  The quotient map $\pi: \g_1\to \g_1\md G_0$ is equidimensional,
i.e., the irreducible components of all fibres of $\pi$ are of dimension $\dim\g_1-\dim\g_1\md G_0$.
Any fibre of $\pi$ contains finitely many $G_0$-orbits and each closed $G_0$-orbit in
$\g_1$ meets $\ce$. We write $\gN(\g_1)$ for $\pi^{-1}(\pi(0))=\gN\cap\g_1$.

If $v\in\g_1$ is semisimple, 
then both $\g_v$ and $\g_{0,v}$ are reductive, and $G_0{\cdot} v$ is the unique closed orbit in $\pi^{-1}(\pi(v))$. 
By Luna's slice theorem \cite{mem33}, there is an isomorphism
\beq   \label{eq:struct-fibre}
     G_0\times_{G_{0,v}}\gN(\g_{1,v}) \isom \pi^{-1}(\pi(v)) , 
\eeq
which takes $(\ov{g,y})\in G_0\times_{G_{0,v}}\gN(\g_{1,v})$ to $g{\cdot}(v+y)$. This implies that 
$\g_{0,v+y}=\g_{0,v}\cap \g_{0,y}$ 
and $y\in \gN(\g_{1,v})$ is $G_{0,v}$-regular if and only if $v+y$ is $G_0$-regular 
in $\pi^{-1}(\pi(v))$ and hence in $\g_1$. 
Let $\h=[\g_v,\g_v]$ and 
let $\z$ be the centre of $\g_v$. Then  $\g_{0,v}=\h_{0}\dotplus\z_0$ and $\g_{1,v}=\h_{1}\oplus\z_1$.
Write $H_0$ for the connected subgroup of $G_0$ with $\Lie H_0=\h_0$.
The $H_0$-orbits in $\g_{1,v}$ coincide with the orbits of the identity component of $G_{0,v}$ and,
since $\z$ consists of semisimple elements, $\gN(\g_{1,v})=\gN(\h_1)$.
Furthermore,
\beq   \label{eq:iff-codim-n}
\g_{0,v+y}=(\h_{0}\dotplus\z_0)\cap\g_{0,y}=\z_0\dotplus (\h_0\cap \g_{0,y})=
\z_0\dotplus \h_{0,y}.  
\eeq
In particular, $\g_{0,v+y}$ has the {\sl codim}--$n$ property if and only if $\h_{0,y}$ has.

Thus, the $G_0$-orbits in $\pi^{-1}(\pi(v))$ and the corresponding centralisers in $\g_0$ can be 
studied via the isotropy representation $(H_0:\h_1)$ related to the ``smaller'' symmetric pair $(\h,\h_0)$. The latter
is called a {\it reduced sub-symmetric pair\/} of $(\g,\g_0)$. 

An explicit description of all reduced sub-symmetric pairs associated with $(\g,\g_0)$
can be given via {\it Satake diagrams}. One usually associates the Satake diagram to a real form of 
$\g$ (see e.g. \cite[Ch.\,4,\S\,4.3]{t41}). But, in view of a one-to-one correspondence between the 
real forms of $\g$ and the $\BZ_2$-gradings of $\g$, one obtains Satake diagrams for 
the symmetric pairs as well. A direct construction goes as follows.
The {\it Satake diagram\/} $\sat$ associated with $(\g,\g_0)$ is the Dynkin diagram of $\g$, where
each node is either black or white, and some pairs of white nodes are joined by a new arrow.
More precisely, choose a Cartan subspace $\ce\subset\g_1$. Let $\te$ be a $\sigma$-stable 
Cartan subalgebra of $\g$ containing $\ce$ and let $\Delta$ be the root system of $(\g,\te)$.
Since $\te$ is $\sigma$-stable, $\sigma$ acts on $\Delta$.
It is possible to choose the set of positive roots, $\Delta^+$, such that
if $\beta\in\Delta^+$ and $\beta\vert_\ce\ne 0$, then 
$\sigma(\beta)\in -\Delta^+$. Let $\Pi\subset\Delta^+$ be the corresponding set of 
simple roots. We identify the simple roots and the nodes of the Dynkin diagram.
For $\ap\in\Pi$, there are the following possibilities:

\begin{itemize}
\item \ If $\ap\vert_\ce=0$, then the root space $\g^\ap\subset \g$ belongs to $\g_0$ and $\ap$ is  black in 
$\sat$;
\item \ if $\ap\vert_\ce\ne0$ and $\sigma(\ap)=-\ap$, then $\ap$ is  white, without arrows attached;
\item \ if $\ap\vert_\ce\ne0$ and $\sigma(\ap)=-\beta\ne -\ap$, then $\beta$ is another simple root,
and the corresponding white nodes are joined by an arrow.
(In this case, we  have $\alpha\vert_{\ce}=\beta\vert_{\ce}$.)
\end{itemize}

\begin{exs}   \label{ex:diagonal-satake}
(1) Suppose that $\g=\h\dotplus\h$ and $\sigma$ is the permutation. Then $\g_0=\Delta_\h\simeq \h$ 
and $\mathsf{Sat}(\h\dotplus\h,\Delta_\h)$ is the union of two copies of the Dynkin diagram for $\h$, 
where the corresponding nodes are joined by arrows. This diagram
has no black nodes at all.

(2) If $\ce\subset\g_1$ is a Cartan subalgebra, then $\sat$ has neither arrows nor black nodes. The corresponding symmetric pair (or, involution) is said to be of {\it maximal rank}. Recall that every simple Lie algebra has a unique, up to conjugation, involution of maximal rank.
\end{exs}
\begin{rmk}    \label{rem:indecomp}
We say that $\sat$ is  {\it connected}, if it is connected as a graph, where the new arrows
are also taken into account. For instance, if $\h$ is simple in Example~\ref{ex:diagonal-satake}(1),
then $\mathsf{Sat}(\h\dotplus\h,\Delta_\h)$ is connected. Recall that $(\g,\g_0)$ is {\it indecomposable\/} if $\g$ cannot be presented as a 
direct sum of two nonzero $\sigma$-stable ideals.
It is easily seen that $\sat$ is connected {\sl if and only if\/} $(\g,\g_0)$ is indecomposable
{\sl if and only if\/} either $\g$ is
simple, or $\h$ is simple in Example~\ref{ex:diagonal-satake}(1).
In general, $(\g,\g_0)$ is a direct sum of indecomposable symmetric pairs that correspond to the 
minimal $\sigma$-stable ideals of $\g$.
\end{rmk}

Looking at $\sat$, one immediately reads off
many properties of a symmetric pair under consideration. 
Recall that $\rr=\g_{0,h}$ for generic $h\in\ce$. Then $[\rr,\rr]$ is the semisimple subalgebra of 
$\g$ corresponding to the set of black nodes in $\sat$
and the dimension of the centre of
$\rr$ equals the number of arrows. Therefore,
\[
   \dim\ce=\rk(\g,\g_0)=\text{(number of white nodes)}-\text{(number of arrows)}.
\]
We say that $\mathsf S'$ is a {\it subdiagram of\/} 
$\sat$ if $\mathsf S'$ is obtained from $\sat$ by iterating the following 
steps: one can either remove one white node, if it does not have an arrow attached; or one 
can remove a pair of nodes connected by an arrow.
The geometric meaning of this notion is the following.

\begin{prop}{(see \cite[Prop.\,1.5]{comm-var-sym})}    \label{prop:sub-sym} 
A symmetric pair $(\h, \h_0)$ occurs as a reduced sub-symmetric pair of
$(\g,\g_0)$ if and only if\/ $\sath$ is a subdiagram of\/ $\sat$.
\end{prop}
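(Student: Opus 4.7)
The plan is to translate both directions of the equivalence into statements about the restricted root system $\Sigma \subset \ce^*$, i.e.\ the set of nonzero restrictions $\alpha|_\ce$ for $\alpha \in \Delta$. With the compatible choice of $\Delta^+$, the simple roots of $\Sigma$ are the nonzero restrictions of elements of $\Pi$; two white nodes joined by an arrow give the same restricted root, while black nodes restrict to zero. Thus simple restricted roots are in bijection with white nodes of $\sat$ modulo arrows, and an iterated subdiagram operation on $\sat$ amounts to selecting a subset $\Pi' \subset \Pi$ that contains every black node and is closed under arrows; equivalently, a subset $\Sigma'$ of simple restricted roots.

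For the ``only if'' direction, let $(\h,\h_0)$ be a reduced sub-symmetric pair coming from a semisimple $v \in \g_1$. By $(\boldsymbol{\dagger}_1)$ the orbit $G_0{\cdot}v$ is closed, and by $(\boldsymbol{\dagger}_3)$ every closed orbit meets $\ce$, so I may replace $v$ by a conjugate lying in $\ce$. Acting by the little Weyl group $N_{G_0}(\ce)/Z_{G_0}(\ce)$, I may further arrange that $\Sigma_v := \{\lambda \in \Sigma : \lambda(v) = 0\}$ is generated by a subset $\Sigma'$ of simple restricted roots. Set $\Pi'$ to be the union of the black nodes with the white nodes restricting into $\Sigma'$. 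The linear independence of simple restricted roots (modulo the arrow identification) yields $\Delta(\g_v) = \BZ\Pi' \cap \Delta$, so $\Pi'$ is a base for the root system of $\g_v$, and hence of $\h = [\g_v,\g_v]$. Writing $\ce_\h := \ce \cap \h_1$ and letting $\z$ denote the centre of $\g_v$, one computes $\z\cap\g_1 = \bigcap_{\lambda\in\Sigma'}\ker\lambda \subset \ce$, so $\ce = \ce_\h \oplus (\z\cap\g_1)$. A brief check shows $\alpha|_{\ce_\h} = 0$ iff $\alpha|_\ce = 0$ for $\alpha \in \Pi'$, while $\sigma$ acts on $\Pi'$ in the same way it acts on $\Pi$. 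Therefore $\sath$ coincides with the subdiagram of $\sat$ supported on $\Pi'$.

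For the converse, starting from an arbitrary subdiagram of $\sat$ (supported on some $\Pi'$ as above), let $\Sigma'$ be the corresponding subset of simple restricted roots and pick $v$ generic in $\bigcap_{\lambda\in\Sigma'}\ker\lambda \subset \ce$. Then $\mu(v) \neq 0$ for every simple restricted root $\mu \notin \Sigma'$, which forces $\Sigma_v$ to coincide with the subsystem generated by $\Sigma'$; the analysis of the previous paragraph then identifies $\sath$ with the prescribed subdiagram.

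The step requiring the most care is the equality $\Delta(\g_v) = \BZ\Pi'\cap\Delta$ for generic $v$. Expanding an arbitrary $\alpha \in \Delta$ in the basis $\Pi$ as $\alpha = \sum c_i\alpha_i$, vanishing of $\alpha|_\ce(v)$ on such $v$ must force $c_i = 0$ whenever $\alpha_i \notin \Pi'$; this comes down precisely to the linear independence of the simple restricted roots of $\Sigma$ after identifying arrow-pairs, which is a standard property of restricted root systems. The remaining verifications (that black nodes remain black, white nodes remain white, and arrows are inherited) are bookkeeping with the $\sigma$-equivariant structure.
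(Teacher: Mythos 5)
The paper does not actually prove this proposition: it is imported verbatim from \cite{comm-var-sym} (Prop.~1.5 there), so there is no in-text argument to compare yours against. Judged on its own, your proof is correct and follows the route one would expect: move the semisimple element $v$ into $\ce$ using $(\boldsymbol{\dagger}_3)$ and the little Weyl group, identify $\Delta(\g_v)$ with $\BZ\Pi'\cap\Delta$ for the subset $\Pi'\subset\Pi$ consisting of the black nodes together with the white nodes whose restrictions lie in $\Sigma'$, and then check that colours and arrows are inherited by $\h=[\g_v,\g_v]$. Two of the steps you dismiss as routine deserve a little more care. First, in the equality $\Delta(\g_v)=\BZ\Pi'\cap\Delta$, linear independence of the simple restricted roots is not by itself sufficient: for an arrow-pair $\ap_i,\ap_j$ restricting to the same $\mu\notin\Sigma'$ one only gets $c_i+c_j=0$ from the restriction, and one must invoke the uniform sign of the coefficients of a root in the basis $\Pi$ (equivalently, dominance of $v$ after the Weyl group adjustment) to conclude $c_i=c_j=0$. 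Second, identifying $\sath$ with the subdiagram of $\sat$ on $\Pi'$ presupposes that $\ce\cap\h_1$ is a Cartan subspace of $\h_1$ --- this follows because $\z\cap\g_1$ consists of semisimple elements of $\g_1$ commuting with $\ce$, hence lies in $\ce$ by maximality, giving $\ce=(\ce\cap\h_1)\oplus(\z\cap\g_1)$ --- and that the positive system $\Delta^+\cap\Delta(\h)$ satisfies the compatibility condition ($\beta\in\Delta^+(\h)$, $\beta\vert_{\ce\cap\h_1}\ne0$ implies $\sigma(\beta)\in-\Delta^+(\h)$) used to define a Satake diagram; both facts follow from $\sigma(\g_v)=\g_v$ together with the vanishing of the roots of $\g_v$ on $\z$, but they are genuinely part of the content of the proposition rather than pure bookkeeping. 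With these checks written out, your argument is a complete and essentially standard proof of the cited result.
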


\begin{rmk}
The connected components of\/ $\sath$ that consist only of black nodes correspond to the simple factors of $\h$ that lie entirely in $\g_0$. Therefore they do not affect $\gN(\h_1)$
and  the structure of the corresponding fibre of $\pi:\g_1\to\g_1\md G_0$.
\end{rmk}

\begin{prop}   \label{prop:ind-nilp}
Given a $\BZ_2$-grading $\g=\g_0\oplus\g_1$, let $z$ be a $G_0$-regular element of\/ $\g_1$. 
Then $\dim\g_{1,z}=\rk(\g,\g_0)$ and\/ $\ind \g_{0,z}=\rk\g-\rk(\g,\g_0)$.
\end{prop}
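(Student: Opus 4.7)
The first equality follows directly from $(\boldsymbol{\dagger}_1)$ and $(\boldsymbol{\dagger}_2)$. A generic $h\in\ce$ is $G_0$-regular because $G_0{\cdot}\ce$ is dense in $\g_1$, and by $(\boldsymbol{\dagger}_2)$ it satisfies $\g_{1,h}=\ce$, so $\dim\g_{1,h}=\rk(\g,\g_0)$. For an arbitrary $G_0$-regular $z$ we have $\dim\g_{0,z}=\dim\g_{0,h}$, and a second application of~\eqref{eq:prop5} then yields $\dim\g_{1,z}=\dim\g_{1,h}=\rk(\g,\g_0)$.

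For the second equality my plan is to use the Jordan decomposition together with the Luna slice isomorphism~\eqref{eq:struct-fibre} to reduce to a base case in which $z$ is nilpotent. Write $z=z_s+z_n$; because $\sigma$ respects Jordan decomposition, $z_s,z_n\in\g_1$, and after $G_0$-conjugation we may take $z_s\in\ce$. Let $(\h,\h_0)$ be the reduced sub-symmetric pair attached to $z_s$, so that $\g_{z_s}=\h\dotplus\z$ with $\z$ the (abelian) centre, $\g_{0,z_s}=\h_0\dotplus\z_0$, and $\g_{1,z_s}=\h_1\oplus\z_1$. By~\eqref{eq:iff-codim-n}, $\g_{0,z}=\z_0\dotplus\h_{0,z_n}$, and because $[\z,\h]=0$ this is a direct product of Lie algebras, which gives $\ind\g_{0,z}=\dim\z_0+\ind\h_{0,z_n}$. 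The slice isomorphism forces $z_n$ to be $H_0$-regular in $\h_1$.

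A short rank bookkeeping now matches both sides. Since $z_s$ is semisimple, $\rk\g=\rk\g_{z_s}=\rk\h+\dim\z$, and $\rk(\g,\g_0)=\rk(\h,\h_0)+\dim\z_1$ because $\ce$ is simultaneously a Cartan subspace of $(\g,\g_0)$ and of $(\g_{z_s},\g_{0,z_s})$. Subtracting,
\[
\rk\g-\rk(\g,\g_0)=\dim\z_0+\bigl(\rk\h-\rk(\h,\h_0)\bigr),
\]
so the statement for $(\g,\g_0,z)$ is equivalent to the same statement for the smaller triple $(\h,\h_0,z_n)$. Iterating, it suffices to treat the nilpotent base case $z_s=0$, i.e.\ $z=e$ is a $G_0$-regular nilpotent element of $\g_1$.

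The base case is the main obstacle. My plan there is to embed $e$ into a $\sigma$-adapted $\tri$-triple $(e,\,\tau,\,f)$ with $\tau\in\g_0$, $f\in\g_1$ (Kostant--Rallis), so that the $\ad\tau$-grading is compatible with the $\BZ_2$-grading and $\g_{0,e}=\bigoplus_{k\ge 0}\g_{0,e}(k)$ with reductive zero-piece $\g_{0,e}(0)$. Exhibiting an explicit $\xi\in\g_{0,e}^*$ --- most naturally constructed from $f$ via the Killing pairing on $\g$ --- whose stabiliser in $\g_{0,e}$ has dimension exactly $\rk\g-\rk(\g,\g_0)$ gives the upper bound on the index; the matching lower bound should come either from a direct count of $\tri$-weight multiplicities combined with $(\boldsymbol{\dagger}_1)$, or from constructing sufficiently many elements of $\cs(\g_{0,e})^{G_{0,e}}$. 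This final step is the technical heart of the argument.
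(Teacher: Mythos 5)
Your proof of the first assertion is correct and coincides with the paper's. The reduction of the second assertion to a $G_0$-regular \emph{nilpotent} element via the Jordan decomposition and the slice isomorphism \eqref{eq:struct-fibre} is also sound: the identities $\rk\g=\rk\h+\dim\z$ and $\rk(\g,\g_0)=\rk(\h,\h_0)+\dim\z_1$ do hold, so the bookkeeping $\rk\g-\rk(\g,\g_0)=\dim\z_0+\bigl(\rk\h-\rk(\h,\h_0)\bigr)$ checks out and the statement for $(\g,\g_0,z)$ is indeed equivalent to the one for $(\h,\h_0,z_n)$. But the argument stops exactly where the difficulty begins: the nilpotent base case is a plan, not a proof. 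You propose to obtain the upper bound $\ind\g_{0,e}\le\rk\g-\rk(\g,\g_0)$ by pairing $f$ with the Killing form, and say the lower bound ``should come'' from a weight count or from producing enough invariants; none of this is carried out, you give no argument that the form built from $f$ has a stabiliser of the right dimension, and a count of $\tri$-weight multiplicities controls $\dim\g_{0,e}$, not its index. What the base case asks for is, in substance, the Elashvili conjecture ($\ind\g_z=\rk\g$) specialised to regular nilpotent elements of $\g_1$ --- a theorem whose known proof \cite{charb-mor} is long and partly case-by-case. So this is a genuine gap, not a routine verification left to the reader.

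The paper sidesteps the base case entirely. For $G_0$-regular $z$ it first notes that $\g_{1,z}$ is abelian (by density of the regular semisimple elements and constancy of $\dim\g_{1,z}$ on the regular set), then invokes \cite[Prop.\,2.6(1)]{gnib}, which says the bracket $\g_{0,z}\times\g_{1,z}\to\g_{1,z}$ vanishes for $G_0$-regular $z$; hence $\g_z=\g_{0,z}\dotplus\g_{1,z}$ is a direct sum of Lie algebras with an abelian summand of dimension $\rk(\g,\g_0)$, so $\ind\g_{0,z}=\ind\g_z-\rk(\g,\g_0)$, and the Elashvili conjecture \cite{charb-mor} gives $\ind\g_z=\rk\g$. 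To complete your argument honestly, cite these two results (or at least the Elashvili conjecture for the nilpotent base case) rather than attempting to reprove them.
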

\begin{proof} 
Consider the induced $\BZ_2$-grading 
$\g_z=\g_{0,z}\oplus\g_{1,z}$ for  $z\in \g_1$. If $z\in\ce$  is $G_0$-regular, then $\g_{1,z}=\ce$, 
i.e., it is a (abelian) toral subalgebra of dimension  $\rk(\g,\g_0)$. Next,
$\g_{0,z}=\rr$ is reductive, with
$\ind\rr=\rk\rr=\rk\g-\rk(\g,\g_0)$, and the assertion holds in this case.

For an arbitrary $G_0$-regular $z$, we still have $\dim\g_{1,z}=\rk(\g,\g_0)$ in view of Eq.~\eqref{eq:prop5}.
Since the $G_0$-regular semisimple elements are dense in the set of  all
$G_0$-regular elements, $\g_{1,z}$ is an abelian subalgebra. By \cite[Prop.\,2.6(1)]{gnib},
the Lie bracket $\g_{0,z} \times \g_{1,z} \to  \g_{1,z}$ is trivial for the $G_0$-regular elements in 
$\g_1$. Hence $\g_z=\g_{0,z}\dotplus\g_{1,z}$ is a direct sum of Lie algebras.
Consequently, $\ind\g_{0,z}=\ind\g_z- \rk(\g,\g_0)$.
Finally, by the "Elashvili conjecture", we have $\ind\g_z=\rk\g$ for all $z\in \g$, 
see~\cite{charb-mor}.
\end{proof}

\section{Generalities on $\BZ_2$-contractions of semisimple Lie algebras}
\label{sect:new}

\noindent
For a $\BZ_2$-grading $\g=\g_0\oplus\g_1$, the semi-direct product $\ka=\g_0\ltimes\g_1$, where
$\g_1$ is an abelian ideal,
is called a $\BZ_2$-{\it contraction\/} of $\g$. This is a particular case of the general concept of 
``contractions of Lie algebras'', see \cite[Ch.\,7,\,\S\,2]{t41}.
The corresponding connected algebraic group 
is the semi-direct product $K=G_0\ltimes\g_1$.
Then $G_0$ is a Levi subgroup of $K$ and the unipotent radical of $K$, $K^u$, is  commutative. 
Of course, $K^u$ is isomorphic to $\exp(\g_1)$. The study of $\BZ_2$-contraction was initiated 
in~\cite{rims07},\cite{coadj}.
 
The vector space $\ka^*$ is isomorphic to $\g_0^*\oplus\g_1^*$. 
Since the Killing form is non-degenerate on $\g_i$, we obtain a fixed isomorphism of $G_0$-modules 
$\g_i\simeq\g^*_i$. Consequently, $\ka$ and $\ka^*$ are also identified as $G_0$-modules, and 
one can speak about Cartan subspaces of $\g_1^*$ and apply all invariant-theoretic results stated 
in $(\dagger_1)$-$(\dagger_3)$ to the action $(G_0:\g_1^*)$.  We write $\g_i^*$, if  we wish 
to stress that $\g_i$ is regarded as a subspace of $\ka^*$.
Upon these identifications,  
the coadjoint representation of $\ka$ is given by the following formula. If $(x_0,x_1)\in \ka$ and 
$(\xi_0,\xi_1)\in\ka^*$, then $(x_0,x_1)\star (\xi_0,\xi_1)=([x_0,\xi_0]+[x_1,\xi_1],[x_0,\xi_1])$.
It follows that $\g_0^*$ is a $K$-stable subspace of $\ka^*$ and $\ka^*/\g_0^*$ is a $K$-module with
trivial action of $K^u$. That is, $\ka^*/\g_0^*$ is identified with the $G_0$-module $\g_1^*$.

We summarise below some fundamental results on the coadjoint representation of $\ka$.
\begin{thm} \label{thm:summary-z2}
For any $\BZ_2$-contraction $\ka$ of a semisimple Lie algebra $\g$, we have
\begin{itemize}
\item[\sf (i)] \ $\ind\ka=\ind\g=\rk\g$ \cite[Prop.\,9.3]{rims07}. Therefore $b(\ka)=b(\g)$.
\item[\sf (ii)] \ $\ka$ has the {\sl codim}--$2$ property  \cite[Theorem\,3.3]{coadj}.
\item[\sf (iii)] \ The algebra $\bbk[\ka^*]^K$ is polynomial, of Krull dimension $l=\rk\g$. 
If $f_1,\dots,f_l$ are alge\-bra\-i\-cally independent homogeneous generators of\/
$\bbk[\ka^*]^K$, then $\sum_{i=1}^l\deg f_i=b(\ka)$~\cite{coadj,kys-1param,kys-4cases}.
\end{itemize}
\end{thm}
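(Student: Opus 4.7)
The plan is to handle each of the three assertions separately via the explicit formula
\[
(x_0,x_1)\star(\xi_0,\xi_1)=\bigl([x_0,\xi_0]+[x_1,\xi_1],\,[x_0,\xi_1]\bigr)
\]
for the coadjoint action, which exhibits $\g_0^*$ as a $K$-submodule and makes the projection $\ka^*\to\g_1^*$ a $G_0$-equivariant map on which $K^u$ acts trivially. The analysis then splits into a ``base'' part, controlled by the isotropy representation $(G_0:\g_1^*)$ from Section~\ref{sect:sym-pairs}, and a ``fibre'' part above each $\xi_1$.

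For \textsf{(i)}, I would compute $\dim\ka_\xi$ at a generic point. Pick $\xi_1\in\g_1^*$ that is $G_0$-regular semisimple, so that $\g_{0,\xi_1}=\rr$ is the reductive centraliser of a Cartan subspace. The stabiliser condition forces $x_0\in\rr$, and the remaining equation $[x_1,\xi_1]=-[x_0,\xi_0]$ shows that, for generic $\xi_0$, the projection of $\ka_\xi$ onto $\g_0$ agrees with the generic stabiliser of $R$ acting on $\g_0^*$ modulo the image of $\ad(\xi_1)\colon\g_1\to\g_0$. Using Eq.~\eqref{eq:prop5} to control this image and Proposition~\ref{prop:ind-nilp} to compute $\ind\rr=\rk\g-\rk(\g,\g_0)$, a direct count yields $\ind\ka=\rk\g$, whence $b(\ka)=b(\g)$.

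For \textsf{(ii)}, the $\sigma$-invariance of the Killing form identifies $(G_0:\g_1^*)$ with $(G_0:\g_1)$, so the codim-$2$ property already holds on the base by Kostant--Rallis. Over the open locus in $\g_1^*$ where $\xi_1$ is $G_0$-regular, a Luna-type slice argument identifies the condition $\xi\in\ka^*_{reg}$ with regularity of the image of $\xi_0$ in $(\g_0/\ad(\xi_1)\g_1)^*$ for the reductive group $R$. The codim-$3$ property for the coadjoint action of $R$ then combines with the codim-$2$ estimate on the base to give the desired codim-$2$ estimate on $\ka^*$.

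For \textsf{(iii)}, I would produce the required $K$-invariants by ``unfolding'' $G$-invariants on $\g^*$: given a Chevalley generating set $h_1,\dots,h_l\in\bbk[\g^*]^G$, substitute $\xi_0+t\xi_1$ and extract the coefficients of $t^j$, which are bi-homogeneous functions on $\ka^*$ whose $K$-invariance follows from a rescaling argument on the $\BZ_2$-contracted bracket. The main obstacle—and the source of the case-by-case work in \cite{coadj,kys-1param,kys-4cases}—is to select exactly $l$ of these coefficients that are algebraically independent with $\sum\deg f_i=b(\g)$; once that is in place, Theorem~\ref{thm:cod2}(ii), whose hypotheses are supplied by (i) and (ii), upgrades algebraic independence to free generation of $\bbk[\ka^*]^K$.
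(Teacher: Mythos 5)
The first thing to say is that the paper offers no proof of this theorem: it is a summary of results established elsewhere (\cite[Prop.\,9.3]{rims07} for (i), \cite[Theorem\,3.3]{coadj} for (ii), and \cite{coadj,kys-1param,kys-4cases} for (iii)), so your proposal has to be measured against those external arguments. Your sketch of (i) — a stabiliser count at $(\xi_0,\xi_1)$ with $\xi_1$ regular semisimple, which is exactly Ra\"{\i}s' formula for semi-direct products and gives $\dim\ka_{(\xi_0,\xi_1)}=\rk(\g,\g_0)+\ind\rr=\rk\g$ — and the overall architecture of (iii) do match what is actually done in the references.

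There are, however, two concrete gaps. In (ii) you assert that over the whole $G_0$-regular locus of $\g_1^*$ the stabiliser $\g_{0,\xi_1}$ is the reductive algebra $\rr$, so that $K$-regularity of $(\xi_0,\xi_1)$ reduces to regularity of $\hat{\xi}_0$ for a reductive group. This holds only when $\xi_1$ is regular \emph{semisimple}, i.e.\ on the principal Luna stratum; a $G_0$-regular nilpotent $\xi_1\in\gN(\g_1)$ has a non-reductive stabiliser not isomorphic to $\rr$, and the codimension properties of precisely these stabilisers are the delicate point (they are the subject of the whole of Section~\ref{sect:codim3} for the {\sl codim}--$3$ statement). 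The {\sl codim}--$2$ conclusion does survive, because the non-principal Luna strata have codimension $\ge 1$ in $\g_1^*$ while the fibrewise non-regular set always has codimension $\ge 1$, so their contribution has codimension $\ge 2$ regardless of what $\g_{0,\xi_1}$ looks like — but that step is absent from your argument. Moreover, the very reduction ``$(\xi_0,\xi_1)\in\ka^*_{reg}$ iff $\hat{\xi}_0$ is $\g_{0,\xi_1}$-regular'' for an arbitrary $G_0$-regular $\xi_1$ requires $\ind\g_{0,\xi_1}=\rk\g-\rk(\g,\g_0)$, i.e.\ Proposition~\ref{prop:ind-nilp}, which rests on the Elashvili conjecture \cite{charb-mor}; this heavy input is not flagged. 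In (iii), it is not true that all coefficients of $t^j$ in $h(\xi_0+t\xi_1)$ are $K$-invariant: only the bi-homogeneous component of \emph{top} degree in the $\g_1$-variables is annihilated by $\g_1$ under the contracted bracket (the lower components are merely $G_0$-invariant), so the only candidates are these top components, one per generator, and their algebraic independence genuinely depends on the choice of generators of $\bbk[\g^*]^G$. You do correctly defer that case-by-case verification to the cited papers and correctly invoke Theorem~\ref{thm:cod2} to upgrade independence plus the degree sum to free generation, which is how those references (and Remark~\ref{rmk:pro-klassification}) proceed.
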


\begin{rmk}     \label{rmk:pro-klassification}
The first assertion in (iii) is achieved via case-by-case considerations. Namely,
results of~\cite{coadj,kys-1param} together cover all but four cases related to simple algebras of type
$\GR{E}{n}$. The remaining involutions 
will be handled in \cite{kys-4cases}.
In each case, a certain set of homogeneous generators of $\bbk[\ka^*]^K$
can be constructed, and the equality $\sum_{i=1}^l\deg f_i=b(\ka)$ arises a posteriori.
However, once one knows somehow that the algebra $\bbk[\ka^*]^K$ is polynomial, there is also a 
conceptual way to establish the equality for the sum of degrees:
\\ \indent
By \cite[Theorem\,2.2]{jf-TG}, which generalises a sum rule obtained in  \cite[Theorem\,1.1]{fonya-et}, 
 if a Lie algebra $\q$ is unimodular, 
$\bbk[\q^*]^Q$ is  polynomial, and the fundamental semi-invariant ${\bf p}_\q$ of $\q$ is
an invariant, then the sum of degrees of generators of $\bbk[\q^*]^Q$ equals $b(\q){-}\deg{\bf p}_\q$.
One easily proves that all $\BZ_2$-contractions are unimodular, and since they have the 
{\sl codim}--$2$ property, ${\bf p}_\ka=1$ for all of them. 
\end{rmk}

\subsection{$\BZ_2$-contractions associated with $\gN$-regular symmetric pairs} 
\label{sect:n-reg}
A symmetric pair $(\g,\g_0)$ 
is called $\gN$-{\it regular\/} if $\g_1$ contains a regular nilpotent element of $\g$.
By a result of Antonyan~\cite{leva}, a symmetric pair 
is  $\gN$-regular if and only if $\g_1$ contains a regular
semisimple element of $\g$. Therefore, $\gN$-regularity is equivalent to that $\sat$ contains no 
black nodes. For the $\gN$-regular symmetric pairs, $m:=\rk\g-\rk(\g,\g_0)$ is equal to the number of 
arrows in $\sat$.
The list of all indecomposable $\gN$-regular symmetric pairs includes
the symmetric pairs of maximal rank (with $m=0$) and also the following pairs:
\begin{enumerate}
\item $(\mathfrak{sl}_{n+k}, \mathfrak{sl}_n\dotplus\mathfrak{sl}_k\dotplus \te_1)$, 
$|n-k|\le 1$, \ $m=\min\{n,k\}$;
\item $(\mathfrak{so}_{2n+2}, \mathfrak{so}_n\dotplus\mathfrak{so}_{n+2})$, \ $m=1$;
\item $(\eus {E}_{6}, \mathfrak{sl}_6\dotplus \tri)$, \ $m=2$;
\item $(\h\dotplus\h,\Delta_\h)$, where $\h$ is simple, \ $m=\rk\h$.
\end{enumerate}
The following result is a straightforward consequence of the theory developed in \cite{coadj}.

\begin{thm}      \label{thm:N-reg}
Suppose that $\g=\g_0\oplus\g_1$ is $\gN$-regular. Then
$\bbk[\ka^*]^{K^u}=\bbk[\ka^*]^{\g_1}$ is a maximal commutative subalgebra 
of\/ $\bbk[\ka^*]$.
\end{thm}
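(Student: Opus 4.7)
The plan is to first reduce the assertion to Poisson-commutativity of $\bbk[\ka^*]^{\g_1}$, because maximality will then come for free. Since $K^u=\exp(\g_1)$ is connected, $\bbk[\ka^*]^{K^u}=\bbk[\ka^*]^{\g_1}$. Because $[\g_1,\g_1]=0$ in $\ka$, for any $x,y\in\g_1$ one has $\{x,y\}(\xi)=\langle\xi,[x,y]\rangle\equiv 0$; hence $\g_1\subset\bbk[\ka^*]^{\g_1}$. Therefore, once commutativity is established, any commutative subalgebra $\mathcal{A}\supseteq\bbk[\ka^*]^{\g_1}$ contains $\g_1$, and every $f\in\mathcal{A}$ must Poisson-commute with $\g_1$, forcing $f\in\bbk[\ka^*]^{\g_1}$ and hence $\mathcal{A}=\bbk[\ka^*]^{\g_1}$.

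For commutativity, the key tool is the explicit coadjoint formula recalled in Section~\ref{sect:new}: $x_1\star(\xi_0,\xi_1)=([x_1,\xi_1],0)$ for $x_1\in\g_1$. From it one reads off that $f\in\bbk[\ka^*]^{\g_1}$ if and only if, at every $\xi\in\ka^*$, the $\g_0$-component of $(\textsl{d}f)_\xi\in\ka$ lies in the stabiliser $(\g_0)_{\xi_1}$. Given $f,g\in\bbk[\ka^*]^{\g_1}$, I would decompose $(\textsl{d}f)_\xi=a+b$ and $(\textsl{d}g)_\xi=c+d$ with $a,c\in\g_0$ and $b,d\in\g_1$; using $[\g_1,\g_1]=0$ in $\ka$, the bracket unfolds as
\[
\{f,g\}(\xi)=\langle\xi_0,[a,c]\rangle+\langle\xi_1,[a,d]\rangle+\langle\xi_1,[b,c]\rangle.
\]
The last two terms vanish because $a,c\in(\g_0)_{\xi_1}$ is equivalent to $\ads(a)\xi_1=\ads(c)\xi_1=0$, so the entire Poisson bracket collapses to $\langle\xi_0,[a,c]\rangle$ with $a,c\in(\g_0)_{\xi_1}$.

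This is where the $\gN$-regular hypothesis enters. By Antonyan's criterion, together with the Satake-diagram discussion in Section~\ref{sect:n-reg}, $\gN$-regularity is equivalent to $\g_1$ meeting the $\g$-regular locus, so the intersection is open and dense in $\g_1$. Consequently, on a dense open subset of $\ka^*$ we may assume that $\xi_1$ is regular in $\g$; by the classical Kostant theorem the centraliser $\g_{\xi_1}$ is then abelian, hence so is $(\g_0)_{\xi_1}=\g_0\cap\g_{\xi_1}$. Therefore $[a,c]=0$, and $\{f,g\}(\xi)=0$ on a dense open subset of $\ka^*$; since $\{f,g\}$ is a polynomial, it vanishes identically.

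The main difficulty is the middle step: recognising that $\g_1$-invariance produces precisely the two cancellations $\langle\xi_1,[a,d]\rangle=\langle\xi_1,[b,c]\rangle=0$ and collapses the Poisson bracket to the single term $\langle\xi_0,[a,c]\rangle$. Once this clean reduction is in place, $\gN$-regularity does the rest via the fact that the centraliser of a $\g$-regular element is abelian. Absent this hypothesis, $(\g_0)_{\xi_1}$ is generically non-abelian and the argument genuinely fails, which is consistent with the "iff" flavour mentioned in the introduction.
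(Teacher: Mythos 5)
Your proof is correct, and your maximality step (any commutative $\ca\supseteq\bbk[\ka^*]^{\g_1}$ contains $\g_1$, hence consists of $\g_1$-invariants) is exactly the paper's. Where you genuinely diverge is in proving commutativity. The paper invokes the explicit structure result \cite[Theorem\,5.2]{coadj}: for $\gN$-regular pairs, $\bbk[\ka^*]^{K^u}$ is freely generated by a basis $e_1,\dots,e_n$ of $\g_1$ together with $m=\rk\g-\rk(\g,\g_0)$ polynomials $\hat F_j$ that are even $K$-invariant, hence Poisson-central; commutativity is then immediate because $\g_1$ is an abelian ideal. You instead argue directly from the differential criterion: $\g_1$-invariance of $f$ forces the $\g_0$-component $a$ of $(\textsl{d}f)_\xi$ into the stabiliser of $\xi_1$, the cross terms $\langle\xi_1,[a,d]\rangle$ and $\langle\xi_1,[b,c]\rangle$ vanish by that same invariance, and the surviving term $\langle\xi_0,[a,c]\rangle$ dies on a dense open set because $\gN$-regularity (via Antonyan) makes $\xi_1$ generically correspond to a $\g$-regular element, whose centraliser is abelian by Kostant, so $\g_{0,\xi_1}=\g_0\cap\g_{\xi_1}$ is abelian. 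This route is more elementary and self-contained --- it bypasses the description of generators entirely --- and it in effect proves the implication ``$\gN$-regular $\Rightarrow$ commutative'' of the equivalence recorded in the remark following Theorem~\ref{thm:N-reg}. What it does not deliver is the extra information the paper gets for free from the generators, namely that $\bbk[\ka^*]^{\g_1}$ is a polynomial algebra of Krull dimension $b(\ka)$ (used afterwards for the lifting to $\eus U(\ka)$); but that is not part of the statement you were asked to prove, so your argument stands as a complete proof of the theorem.
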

\begin{proof}
For the $\gN$-regular $\BZ_2$-gradings, $\bbk[\ka^*]^{K^u}$ is a polynomial algebra of Krull 
dimension $\dim\g_1+\rk\g-\rk(\g,\g_0)=b(\g)$.
More precisely, let $e_1,\dots,e_n$ be a basis for $\g_1$. We regard the $e_i$'s as linear function on
$\g_1^*$ and hence on $\ka^*$.
Then $\bbk[\ka^*]^{K^u}$ is freely generated by $e_1,\dots,e_n$, $\hat F_1,\dots, \hat F_m$, 
where $m=\rk\g-\rk(\g,\g_0)$ and $\hat F_1,\dots,\hat F_m$ are explicitly described polynomials that 
are even $K$-invariant \cite[Theorem\, 5.2]{coadj}. Since $\g_1$ is an Abelian ideal in $\ka$ and the 
$\hat F_j$'s belong to the centre of the Poisson algebra $\bbk[\ka^*]$,
the algebra $\bbk[\ka^*]^{K^u}$ is commutative.
On the other hand, if $\ca\supset \bbk[\ka^*]^{K^u}$, then $\ca$ contains the whole
space $\g_1$. Hence the commutativity of $\ca$ implies that
$\ca\subset \bbk[\ka^*]^{\g_1}$.
\end{proof}

\noindent  Since all elements of $\cs(\ka)^K$ can naturally be lifted to the centre of $\eus U(\ka)$ and there is no problem with lifting elements of degree $1$, 
the above description of free generators shows that 
$\cs(\ka)^{\g_1}=\bbk[\ka^*]^{\g_1}$ can be lifted to the enveloping algebra $\eus U(\ka)$; that is,
there exists a commutative subalgebra $\tilde\ca\subset \eus U(\ka)$ such that $\gr(\tilde\ca)=
\cs(\ka)^{\g_1}$. In particular, 
if $m=0$, i.e., $(\g,\g_0)$ is of maximal rank, then just $\cs(\g_1)=\bbk[\g_1^*]$ is a maximal 
commutative subalgebra of $\eus U(\ka)$. 

\begin{rmk}
The algebra $\bbk[\ka^*]^{\g_1}$ can be considered for any $\BZ_2$-contraction. However, we can
prove  that
\\
\centerline{the algebra $\bbk[\ka^*]^{\g_1}$ is commutative $\Longleftrightarrow$ $(\g,\g_0)$ is $\gN$-regular.
}
Recall that $\rr=\z(\ce)_0=\g_{0,\xi}$ for a Cartan subspace $\ce\in\g_1\simeq\g_1^*$ and 
generic $\xi\in\ce$. Then 
\begin{multline*}
\trdeg \bbk[\ka^*]^{\g_1}=\dim\ka-\max_{\xi\in\ka^*} \dim K^u{\cdot}\xi=
\dim\ka-\dim\g_1+\min_{\xi\in\g_1^*}\dim\g_{1,\xi}=
\\
=\dim\ka-\dim\g_0+\min_{\xi\in\g_1^*}\dim\g_{0,\xi}=\dim\g_1+\dim \rr .
\end{multline*}
On the other hand, Ra\"\i s' formula for the index of semi-direct product \cite{rais} 
shows that $\ind\ka=\dim\g_1-\dim\g_0+\dim\rr+\ind\rr$ and therefore
\[
  b(\ka)=(\dim\ka+\ind\ka)/2=\dim\g_1+b(\rr).
\] 
It follows that $\trdeg \bbk[\ka^*]^{\g_1}\ge b(\ka)$ and the equality occurs if and
only if $\rr$ is toral. The latter is equivalent to that $\ce$ contains regular semisimple elements of $\g$, i.e.,
$(\g,\g_0)$ is $\gN$-regular.
\end{rmk}

For involutions (symmetric pairs) of maximal rank, $\dim\g_1=b(\g)$. Therefore, the 
commutative Lie subalgebra $\g_1$ is a {\it commutative polarisation\/} of $\ka$~\cite[Sect.\,5]{fonya00}.
Conversely, using \cite[Prop.\,20]{fonya00}, one can prove that if $\ka$ admits a commutative polarisation, then $(\g,\g_0)$ is
of maximal rank.

\section{$\BZ_2$-contractions with codimension--3 property}
\label{sect:codim3}

\noindent
The {\sl codim}--$3$ property does not hold for all $\BZ_2$-contractions.
In~\cite[Example\,4.1]{do}, we noticed  that, for the involutions of maximal rank and
$\xi\in\ka^*_{reg}\cap\g_1^*$, the commutative subalgebras $\cf_\xi(\eus Z(\ka))$ fail to be maximal, and 
thereby $\ka$ does not have the {\sl codim}--$3$ property. 
In this section, we obtain a characterisation of $\BZ_2$-contractions with the {\sl codim}--$3$ property.

\begin{df}
A node of the Satake diagram $\sat$ is said to be {\it trivial}, if it is white, 
does not have an arrow attached, and all adjacent nodes are also white. 
\end{df}

\begin{thm}   \label{thm:main}
A $\BZ_2$-contraction $\ka=\g_0\ltimes\g_1$ of a semisimple algebra $\g$
has the {\sl codim}--$3$ property if and only if\/ $\sat$ has no trivial nodes.
\end{thm}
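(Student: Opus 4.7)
The plan is to analyze the bad locus $\ka^*\setminus \ka^*_{reg}$ by stratifying $\ka^*=\g_0^*\oplus\g_1^*$ along the second factor via the quotient map $\pi:\g_1^*\to \g_1^*\md G_0$, and then applying the Luna slice description \eqref{eq:struct-fibre} to reduce to reduced sub-symmetric pairs. First I would derive an explicit formula for $\dim\ka_\xi$ with $\xi=(\xi_0,\xi_1)$: projecting $\ka_\xi\to \g_{0,\xi_1}$ (along the $\g_0$-component) has affine fibres of dimension $\dim\g_{1,\xi_1}$, hence
\[
 \dim\ka_\xi=\dim\g_{1,\xi_1}+\dim\bigl\{x_0\in\g_{0,\xi_1}: \ads_{x_0}(\xi_0)\in [\g_1,\xi_1]\bigr\},
\]
where $[\g_1,\xi_1]$ is regarded as a subspace of $\g_0^*$. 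Since $\ind\ka=\rk\g$ by Theorem~\ref{thm:summary-z2}(i), regularity of $\xi$ is a codimension condition on this second summand, and one sees that if $\xi_1$ is $G_0$-regular semisimple, so that $\g_{0,\xi_1}=\rr$ and $\g_{1,\xi_1}=\ce$, then $\xi$ is $K$-regular iff $\xi_0$ is $R$-regular modulo the image of $\ads\xi_1$.

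For the easier ``only if'' direction, suppose $\sat$ carries a trivial node $\alpha$. By Proposition~\ref{prop:sub-sym}, the subdiagram consisting of $\alpha$ alone produces $(\mathfrak{sl}_2,\te_1)$ as a reduced sub-symmetric pair. Choose a semisimple $v\in\g_1^*$ whose $(\h,\h_0)$ is $(\mathfrak{sl}_2,\te_1)$ (extended by black nodes that do not affect $\gN(\h_1)$). A direct computation on the three-dimensional contraction $\ka(\mathfrak{sl}_2,\te_1)=\te_1\ltimes\bbk^2$ shows that its bad locus is precisely $\{\xi_1=0\}$, of codimension $2$. Using \eqref{eq:iff-codim-n}, the $K$-regularity of $v+y$ with $y\in \gN(\h_1)$ is equivalent to $H_{0,y}$-regularity of the appropriate shift, so translating the $(\mathfrak{sl}_2,\te_1)$-codim-$2$ bad set through $G_0\times_{G_{0,v}}\gN(\g_{1,v})\cong \pi^{-1}(\pi(v))$ and taking the product with the full $\g_0^*$-factor gives a codimension-$2$ component of $\ka^*\setminus \ka^*_{reg}$.

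For the ``if'' direction, assume $\sat$ has no trivial nodes. The strategy is to stratify $\ka^*$ by $G_0$-orbit-types of $\xi_1\in\g_1^*$ (equivalently by sub-symmetric pairs, via Luna) and show that each stratum contributes a bad locus of codimension $\ge 3$. The generic stratum, where $\xi_1$ is $G_0$-regular semisimple, is handled by studying $R$-regularity of $\xi_0$ in $\g_0^*/[\g_1,\ce]$: since $\rr$ is reductive and $\rk\rr=\rk\g-\rk(\g,\g_0)$ (Proposition~\ref{prop:ind-nilp}), the bad set here is a codim-$3$ variety by Kostant. The non-generic strata are handled inductively by passing to the smaller contraction $\h_0\ltimes\h_1$ associated with the reduced sub-symmetric pair at a semisimple $v$: by induction on $\dim\g$, the bad locus for this smaller contraction has codim $\ge 3$ provided $\sath$ has no trivial nodes, and the assumption that $\sat$ has none forces $\sath$ to inherit this property (a trivial node of $\sath$ would lift to a trivial node of $\sat$ by Proposition~\ref{prop:sub-sym}). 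Combining the codim-$3$ in fibres with the dimension of $\pi$-strata yields the required bound on $\ka^*$.

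The main obstacle is the induction step of the sufficient direction: one must verify that a priori there is no other source of codimension-$2$ components, that is, that the trivial-node mechanism is the \emph{only} way codim-$3$ can fail. A conceptual argument should proceed by showing that any irreducible codim-$2$ component of $\ka^*\setminus\ka^*_{reg}$ must meet $\g_0^*\times(\text{small stratum of }\g_1^*)$ in a way that, via the slice theorem, exhibits a $(\mathfrak{sl}_2,\te_1)$-slice, and hence a trivial node. If this conceptual argument proves elusive, I would fall back on a case-by-case check against the classification of indecomposable Satake diagrams (Table~\ref{table:all}), verifying codim-$3$ for each family whose diagram has no trivial nodes.
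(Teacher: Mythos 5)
Your overall strategy --- stratify $\ka^*=\g_0^*\oplus\g_1^*$ over the Luna strata of $\g_1^*\md G_0$, use a stabiliser formula of the shape $\dim\ka_\eta=\codim_{\g_1^*}(G_0{\cdot}\beta)+\dim(\g_{0,\beta})_{\hat\ap}$, and reduce the fibre analysis to reduced sub-symmetric pairs via the slice theorem --- is exactly the paper's, and your necessity direction and open-stratum argument agree with it: a trivial node produces, inside the codimension-$1$ strata, a codimension-$2$ family of closed orbits $G_0{\cdot}\xi$ with $\g_{0,\xi}=\rr\dotplus\te_1$, whence $\dim\ka_\eta\ge\ind\ka+2$ on a codimension-$2$ subset of $\ka^*$.

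The gap is in the sufficiency direction. Your induction on $\dim\g$ propagates the wrong statement: what is needed at a non-generic point $v+y$ (with $v$ semisimple and $y\in\gN(\h_1)$) is not the codim--$3$ property of the smaller contraction $\h_0\ltimes\h_1$, but --- by the stabiliser formula together with $\g_{0,v+y}=\z_0\dotplus\h_{0,y}$ --- the codim--$2$ property of the \emph{coadjoint representation of the centraliser} $\h_{0,y}$ for $y$ a regular nilpotent element in a rank-one reduced sub-symmetric pair, plus control of $\codim_{\gN(\h_1)}\gN(\h_1)_{sg}$. These are different invariants, and they do not follow from any inductive hypothesis about contractions of sub-symmetric pairs: the paper imports them case by case ($\h_{0,y}$ has the codim--$2$ property by \cite{ppy} for $\h=\sln,\spn$; it is a $\BZ_2$-contraction of $\mathfrak{so}_{n-2}$ for $\h=\son$; it is $\eus G_2\ltimes\bbk^7$ for $\h=\eus F_4$). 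Moreover, your guiding principle that the trivial-node mechanism is the \emph{only} possible source of codimension-$2$ bad components is false at the level at which you would apply it: for $(\g,\g_0)=(\mathfrak{sl}_{2n+1},\sln\dotplus\slno\dotplus\te_1)$ the relevant rank-one slice is $(\mathfrak{sl}_3,\mathfrak{gl}_2)$, one has $\codim_{\gN(\h_1)}\gN(\h_1)_{sg}=1$, and the resulting subset of $\ka^*$ has codimension $2$ --- exactly as in the trivial-node case --- yet the codim--$3$ property survives because that set happens to contain $K$-regular elements ($\ind\g_{0,\xi+\nu}=n-1$ exactly compensates the extra orbit codimension). Any proof must detect this cancellation, which your dichotomy cannot; and the fallback ``check each family in Table~\ref{table:all}'' is not a proof, since verifying codim--$3$ for a given family is precisely the stratum-by-stratum analysis you are trying to bypass. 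Finally, the codimension-$2$ Luna strata require their own (soft) argument --- every irreducible component of a fibre of $\pi$ contains $G_0$-regular elements --- which your outline omits.
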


\begin{rmk}    \label{bad-rank-1} A description of the reduced sub-symmetric pairs via $\sat$ 
(see Prop.~\ref{prop:sub-sym}) shows that this theorem can be restated as follows. 
A $\BZ_2$-contraction $\ka=\g_0\ltimes\g_1$ has the {\sl codim}--$3$ property if and only if\/ 
there is no reduced sub-symmetric pairs of rank $1$ associated with
$(\g,\g_0)$ of the form 
$(\h,\h_0)=(\mathfrak{sl}_2,\mathfrak{so}_2){\dotplus}(\el, \el)$, where $\el$ is a semisimple 
Lie algebra. (That is, $\tri$ is the only simple ideal of $\h$ with non-trivial restriction of $\sigma$.)
It is also easily verified that the $\BZ_2$-contraction $\ka$ 
arising from the symmetric pair $(\tri,\mathfrak{so}_2)$ does not have the {\sl codim}--$3$ property.
\end{rmk}

Before dwelling on the proof, we provide an explicit description of the corresponding
symmetric pairs. If $(\g,\g_0)$ is not indecomposable, then $\ka$ is the direct sum of the 
$\BZ_2$-contractions corresponding to the minimal $\sigma$-stable ideals of $\g$ (cf. 
Remark~\ref{rem:indecomp}). Therefore, it suffices to point out the admissible 
indecomposable symmetric pairs.

\begin{cl}  \label{cl:tablitsa}
Suppose that $(\g,\g_0)$ is indecomposable and\/ $\ka=\g_0\ltimes\g_1$ has the {\sl codim}--$3$ 
property. Then either $\g=\h\dotplus\h$, where $\h$ is simple and  $\g_0=\Delta_\h$, or $\g$ is simple and $(\g,\g_0)$ occurs in Table~\ref{table:all}.
\end{cl}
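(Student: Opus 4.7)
The plan is to combine Theorem~\ref{thm:main} with the standard classification of indecomposable symmetric pairs, checking in each case whether the Satake diagram has a trivial node.

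First, by Remark~\ref{rem:indecomp}, an indecomposable symmetric pair $(\g,\g_0)$ falls into one of two classes: either $\g$ is simple, or $\g=\h\dotplus\h$ with $\g_0=\Delta_\h$ for a simple $\h$. In the latter case, by Example~\ref{ex:diagonal-satake}(1) the Satake diagram $\mathsf{Sat}(\h\dotplus\h,\Delta_\h)$ consists of two copies of the Dynkin diagram of $\h$ in which every white node is the endpoint of an arrow. Hence no node is trivial in the sense of the preceding definition, and Theorem~\ref{thm:main} gives the {\sl codim}--$3$ property automatically. This accounts for the first family listed in the statement.

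It therefore remains to treat the case when $\g$ is simple. Here I would invoke the classical list of involutions of simple Lie algebras (equivalently, of real forms) together with their Satake diagrams, for instance as tabulated in \cite[Ch.\,4,\S\,4.3]{t41}. For each simple type ($\GR{A}{n}$ through $\GR{G}{2}$) one goes through the finite list of Satake diagrams and inspects whether a trivial node exists, i.e.\ a white node without an attached arrow such that all of its neighbours in the underlying Dynkin diagram are also white. By Theorem~\ref{thm:main}, the $\BZ_2$-contractions with the {\sl codim}--$3$ property are exactly those whose Satake diagram passes this test; collecting the successful diagrams yields precisely the entries of Table~\ref{table:all}.

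The main (though routine) obstacle is this exhaustive case-by-case scan through the classification. Two observations streamline it. First, any Satake diagram that is entirely white and arrow-free other than an isolated chain of length $\ge 3$ inevitably contains an internal trivial node; this quickly eliminates most involutions of maximal rank beyond low ranks. Second, it suffices to check the endpoints of arrows and the white--white adjacencies, since black nodes and arrow-endpoints are automatically non-trivial by definition. With these observations the verification reduces to a short check per Satake diagram, and the surviving diagrams match Table~\ref{table:all}, completing the corollary.
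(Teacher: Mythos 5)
Your proposal is correct and follows essentially the same route as the paper, which states the corollary without a separate proof precisely because it is obtained by combining Theorem~\ref{thm:main} with a case-by-case inspection of the Satake diagrams in the standard classification of indecomposable symmetric pairs; your treatment of the diagonal case $(\h\dotplus\h,\Delta_\h)$ (every node carries an arrow, hence none is trivial) is exactly the intended argument. One small caution on your first streamlining observation: \emph{every} maximal-rank involution of a simple $\g$ is eliminated, including the rank-one case $(\tri,\mathfrak{so}_2)$, since an isolated white arrow-free node is trivial vacuously (cf.\ Remark~\ref{bad-rank-1}), so the hedge about low ranks is unnecessary and slightly misleading, though the direct definition-check you describe would catch these cases anyway.
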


\begin{table}[htb]  
\begin{center}
\begin{tabular}{clccc}   
  &  $(\g,\g_0)$ &  $\sat$  &  $\text{rk}(\g,\g_0)$ & $\rr=\z(\ce)_0$ \\  \hline
1) & $(\sln, \mathfrak{sl}_k\dotplus\mathfrak{sl}_{n-k}\dotplus \te_1)$, 
&  
\begin{picture}(155,38)(0,7)
\setlength{\unitlength}{0.016in}
\multiput(10,8)(30,0){2}{\circle{5}}
\multiput(55,8)(25,0){2}{\circle*{5}}
\multiput(95,8)(30,0){2}{\circle{5}}
\multiput(43,8)(40,0){2}{\line(1,0){9}}
\multiput(13,8)(45,0){2}{\line(1,0){4}}
\multiput(33,8)(40,0){2}{\line(1,0){4}}
\multiput(98,8)(20,0){2}{\line(1,0){4}}
\multiput(19.5,5)(42,0){3}{$\cdots$}
\multiput(19.5,13)(84,0){2}{$\cdots$}
{\color{blue}\put(67.5,14){\oval(115,30)[t]}
\put(67.5,14){\oval(55,20)[t]}
\multiput(10,14)(30,0){2}{\vector(0,-1){3}}
\multiput(95,14)(30,0){2}{\vector(0,-1){3}}}
\end{picture}
& $k$  & $\mathfrak{sl}_{n-2k}\dotplus\te_k$ \\
 & $0{<}k {<} n{-}k$ & & & 
\\
2) & $(\mathfrak{sl}_{2n},\spn)$, $n\ge 2$ & 
\begin{picture}(130,20)(-10,5)
\put(10,8){\circle*{6}}    
\put(30,8){\circle{6}}     
\put(13,8){\line(1,0){14}}
\multiput(70,8)(40,0){2}{\circle*{6}} 
\put(90,8){\circle{6}}
\multiput(73,8)(20,0){2}{\line(1,0){14}}
\multiput(33,8)(29,0){2}{\line(1,0){5}}   
\put(42,5){$\cdots$}
\end{picture}
& $n-1$  & $(\tri)^n$ 
\\
3) & $(\mathfrak{so}_{4n+2}, \mathfrak{gl}_{2n+1}),  n{\ge} 2$ & 
\begin{picture}(150,30)(0,5)
\put(10,8){\circle*{6}}    
\put(30,8){\circle{6}}     
\put(13,8){\line(1,0){14}}
\multiput(70,8)(40,0){2}{\circle*{6}} 
\put(90,8){\circle{6}}
\multiput(73,8)(20,0){2}{\line(1,0){14}}
\multiput(130,-2)(0,20){2}{\circle{6}}
\put(113,10){\line(2,1){13}}
\put(113,6){\line(2,-1){13}}
\multiput(33,8)(29,0){2}{\line(1,0){5}}   
\put(42,5){$\cdots$}
{\color{blue}\put(137,8){\oval(20,20)[r]}}
{\color{blue}\put(133,18){\vector(-1,0){2}}}
{\color{blue}\put(130,-2){\vector(-1,0){2}}}
\end{picture}   & $n$  & $(\tri)^n\dotplus\te_1$ \\
4) & $(\son,\mathfrak{so}_{n-1}), \ n\ge 5$ &  
\begin{picture}(150,15)(0,5)
\put(10,8){\circle{6}}    
\multiput(30,8)(20,0){2}{\circle*{6}}
\multiput(13,8)(20,0){3}{\line(1,0){14}}
\put(72,5){$\cdots$}
\end{picture} 
& $1$ & $\mathfrak{so}_{n-2}$
\\
5) & $(\spn, \mathfrak{sp}_{2k}\dotplus\mathfrak{sp}_{2n-2k})$,  & 
\begin{picture}(165,25)(0,5)
\multiput(30,8)(60,0){2}{\circle{6}}
\multiput(70,8)(40,0){3}{\circle*{6}}
\multiput(10,8)(160,0){2}{\circle*{6}}
\multiput(152.5,7)(0,2){2}{\line(1,0){15}}
\multiput(73,8)(20,0){2}{\line(1,0){14}}
\multiput(33.1,8)(29,0){2}{\line(1,0){5}}
\multiput(113,8)(29,0){2}{\line(1,0){5}}
\put(42,5){$\cdots$}
\put(122,5){$\cdots$}
\put(13,8){\line(1,0){14}}
\put(155,5){$<$} 
\end{picture}   & $k$  & $(\tri)^k{\dotplus} \mathfrak{sp}_{2n-4k}$
\\
 & $1\le k \le n{-}k$ & & &
\\
6) & $(\eus E_{6},\eus {F}_{4})$ & 
\begin{picture}(90,25)(25,10) 
\setlength{\unitlength}{0.016in} 
\multiput(38,18)(15,0){4}{\line(1,0){9}}
\put(65,3){\circle*{5}}
\multiput(50,18)(15,0){3}{\circle*{5}}
\multiput(35,18)(60,0){2}{\circle{5}}
\put(65,6){\line(0,1){9}}
\end{picture}  & $2$ & $\mathfrak{so}_8$ 
\\
7) & $(\eus E_{6},\mathfrak{so}_{10}\dotplus\te_1)$ &  
\begin{picture}(90,45)(25,10) 
\setlength{\unitlength}{0.016in} 
\multiput(38,18)(15,0){4}{\line(1,0){9}}
\put(65,3){\circle{5}}
\multiput(50,18)(15,0){3}{\circle*{5}}
\multiput(35,18)(60,0){2}{\circle{5}}
\put(65,6){\line(0,1){9}}
{\color{blue}\put(65,23){\oval(60,20)[t]}
\multiput(35,23)(60,0){2}{\vector(0,-1){2}}}
\end{picture}
& $2$  &  $\mathfrak{sl}_4\dotplus\te_1$
\\
8) & $(\eus {F}_{4}, \mathfrak{so}_{9})$ & 
\begin{picture}(90,20)(40,5)  
\setlength{\unitlength}{0.016in} 
\put(50,8){\circle{5}}
\multiput(70,8)(20,0){3}{\circle*{5}}
\multiput(53,8)(40,0){2}{\line(1,0){14}}
\multiput(72.5,6.6)(0,2.6){2}{\line(1,0){14.8}}
\put(76,5.2){$<$} 
\end{picture}
& $1$ & $\mathfrak{so}_7$ 
\\ \hline
\end{tabular} \vskip1.2ex
\caption{The symmetric pairs with $\g$ simple and the {\sl codim}--$3$ property for $\ka$.}
\label{table:all}
\end{center}
\end{table}
{\sl Remarks on Table~\ref{table:all}.}
(i)  The number of black nodes in item~1) equals $n-1-2k$ and the number of arrows equals
$k$. If $k+1=n-k$, then $\sat$ has no black nodes at all.
\\ \indent
(ii) The right-hand end of $\sat$ in item 4) depends on the parity of $n$ (type {\bf B} or
{\bf D}).

\begin{proof}[Proof of Theorem~\ref{thm:main}]
We begin with describing certain $K$-regular elements of $\ka^*\simeq \g_0^*\oplus\g_1^*$.
Consider the mappings
\[
    \ka^*  \stackrel{\psi}{\longrightarrow}  \g_1^*  \stackrel{\pi}{\longrightarrow} \g_1^*\md G_0 \ ,
\]
where $\psi$ is the projection with kernel $\g_0^*$ and $\pi$ is the quotient morphism. 
Recall that $\g_0^*$ is a $K$-submodule of
$\ka^*$, hence $\psi$ is a surjective homomorphism of $K$-modules (the unipotent radical
$K^u$ acts trivially on $\g_1^*$).
Let $\eta=(\ap,\beta)\in \ka^*$ be an arbitrary point, 
where $\ap\in\g_0^*$ and $\beta\in\g_1^*$. 
Write $\g_{0,\beta}$ for the stabiliser of $\beta$ in $\g_0$.
Then $\g_1\star\beta=\Ann(\g_{0,\beta})\subset\g_0^*$ and 
therefore $\g_0^*/(\g_1\star\beta) \simeq \g_{0,\beta}^*$.
Using the last isomorphism, we let $\hat\ap$ denote the image of $\ap$ in $\g_{0,\beta}^*$.
By  \cite[Prop.\,5.5]{rims07}, 
\begin{equation}  \label{eq:dim_stab}
  \dim \ka_{\eta}=\codim_{\g_1^*}(G_0{\cdot}\beta)+\dim (\g_{0,\beta})_{\hat\ap} \ ,
\end{equation}
where the last summand refers to the stabiliser of $\hat\ap$ with respect to the
coadjoint representation of $\g_{0,\beta}$.
Since $\psi^{-1}(G_0{\cdot}\beta)=\g_0^*\times G_0{\cdot}\beta$ is $K$-stable,
it follows from Eq.~\eqref{eq:dim_stab} that
\beq  \label{eq:min-codim}
  \min \bigl(\codim_{\ka^*} \text{\{$K$-orbits in $\psi^{-1}(G_0{\cdot}\beta)$\}}\bigr)=\codim_{\g_1^*}(G_0{\cdot}\beta)+
  \ind(\g_{0,\beta}) . 
\eeq
If $\beta\in(\g_1^*)_{reg}$, then $\ind(\g_{0,\beta})=\rk\g-\rk(\g,\g_0)$ (see Proposition~\ref{prop:ind-nilp})
and $\codim_{\g_1^*}(G_0{\cdot}\beta)=\rk(\g,\g_0)$. Consequently, 
\beq   \label{eq:fibre-soderzhit}
\text{if $\beta\in(\g_1^*)_{reg}$, \ then $\psi^{-1}(G_0{\cdot}\beta)=\g_0^*\times G_0{\cdot}\beta$  \ contains $K$-regular elements.}
\eeq
Consider the Luna stratification of the quotient variety $\g_1^*\md G_0$, see \cite[III.2]{mem33}.
By definition, $\bar\xi,\bar\xi'\in \g_1^*\md G_0$ belong to the same stratum, if 
the closed $G_0$-orbits in $\pi^{-1}(\bar\xi)$ and $\pi^{-1}(\bar\xi')$
are isomorphic as $G_0$-varieties. Each stratum is locally closed, and there are finitely many of them. (An exposition of Luna's theory can also be found in~\cite{slo}.)
Write $\bar\Omega_i$ for the union of all strata of codimension $i$.
In particular, $\bar\Omega_0$ is the unique open stratum.

Set $\Omega_i=\pi^{-1}(\bar\Omega_i)$ 
and $\Xi_i=\psi^{-1}(\Omega_i)=\g_0^*\times\Omega_i$.
 Since both $\pi$ and $\psi$ are equidimensional,
$\codim_{\ka^*}\Xi_i=\codim_{\g_1^*}\Omega_i=i$. Therefore,
$\Xi_0\cup\Xi_1\cup\Xi_2$ has the complement of codimension $\ge 3$ in $\ka^*$, 
$\Omega_0\cup\Omega_1\cup\Omega_2$ has the complement of codimension $\ge 3$ in $\g_1^*$,
and we may not care about the strata of codimension $\ge 3$. Our ultimate goal is to characterise
the symmetric pairs such that  $(\Xi_0\cup\Xi_1\cup\Xi_2)\cap \ka^*_{reg}$ still
has the complement of codimension $\ge 3$ 
in $\ka^*$. More precisely, we are going to find out whether 
$(\Xi_i)_{sg}:=\Xi_i \setminus (\Xi_i\cap\ka^*_{reg})$ is of codimension $\ge (3-i)$ in $\Xi_i$.
It appears to be that for $i=0,2$, this condition is satisfied for all $\BZ_2$-contractions, and 
non-trivial constraints occur only  for $i=1$.
  
{\color{MIXT}$(\Xi_0)$-case.} \ 
If $\bar\xi\in\bar\Omega_0$, then $\pi^{-1}(\bar\xi)=G_0{\cdot}\xi$ (a sole closed and $G_0$-regular
orbit!). Here $\g_{0,\xi}$ is reductive and
\[
   \Xi_0=\bigsqcup_{\bar\xi\in \bar\Omega_0} (\g_0^*\times G_0{\cdot}\xi) .
\]
It follows from Eq.~\eqref{eq:dim_stab} that $(\ap,\xi)\in \ka^*_{reg}$ if and only if $\hat\ap$ is 
$\g_{0,\xi}$-regular. Since $\g_{o,\xi}$ has the {\sl codim}--$3$ property (see 
Example~\ref{ex:codim3-red}), we conclude that $(\Xi_0)_{sg}$
is of codimension $\ge 3$.

{\color{MIXT}$(\Xi_1)$-case.} \ If $\bar\xi\in\bar\Omega_1$, 
then $\pi^{-1}(\bar\xi)$ is not a sole orbit.  Below, we use the notation that 
\begin{itemize}
\item[\bf --]  \  $\xi\in\pi^{-1}(\bar\xi)$ is {\bf semisimple} and, without loss of generality, we assume that $\xi\in\ce$;
\item[\bf --]  \ $\zeta\in\pi^{-1}(\bar\xi)$ is $G_0$-{\bf regular} and hence $G_0{\cdot}\zeta$ is open 
in $\pi^{-1}(\bar\xi)$;
\item[\bf --]  \ $\pi^{-1}(\bar\xi)_{sg}$ is the complement of set of $G_0$-regular elements of $\pi^{-1}(\bar\xi)$.
\end{itemize}
Let $\bar\Omega_1^{(j)}$ be a Luna stratum of codimension~$1$ and 
$\Omega_1^{(j)}:=\pi^{-1}(\bar\Omega_1^{(j)})$, $\Xi_1^{(j)}:=\psi^{-1}(\Omega_1^{(j)})$ the 
corresponding strata in $\g_1^*$ and $\ka^*$. Then
\begin{multline*}
   \Xi_1^{(j)}=\g_0^*\times\Omega_1^{(j)}=\bigsqcup_{\bar\xi\in\bar\Omega_1^{(j)}}(\g_0^*\times \pi^{-1}(\bar\xi))= \\
  \bigl(\bigsqcup_{\zeta}\ (\g_0^*\times G_0{\cdot}\zeta)\bigr)\cup 
  \bigl(\bigsqcup_{\bar\xi\in\bar\Omega_1^{(j)}}(\g_0^*\times \pi^{-1}(\bar\xi)_{sg})\bigr)=:
  \mathfrak Y^{(j)}\cup\mathfrak Z^{(j)} ,
\end{multline*}
where $\zeta$ ranges over the set of representative of all $G_0$-regular orbits in 
$\Omega_1^{(j)}$. 

The required information on $\mathfrak Y^{(j)}$ and $\mathfrak Z^{(j)}$ will be extracted from the 
coadjoint representation of $\g_{0,\zeta}$ and the Satake diagram associated with the closed orbits 
in $\Omega_1^{(j)}$, respectively. 
For $\xi\in\Omega_1^{(j)}\cap\ce$, the corresponding reduced sub-symmetric pair is of rank~$1$. 
As in Section~\ref{sect:sym-pairs}, we consider $\h=[\g_\xi,\g_\xi]=\h_0\oplus\h_1$ and the action
$(H_0:\gN(\h_1))$. 

\textbullet \ By Eq.~\eqref{eq:dim_stab} and \eqref{eq:fibre-soderzhit}, $\mathfrak Y^{(j)}$ contains 
$K$-regular elements and the dimension of their complement is determined by the coadjoint 
representation of $\g_{0,\zeta}$. Namely, if $\g_{0,\zeta}$ has the {\sl codim}--$n$ property, then
$\codim_{\ka^*} (\mathfrak Y^{(j)}\setminus (\mathfrak Y^{(j)}\cap\ka^*_{reg}))=n+1$. Hence we
need the {\sl codim}--$2$ property for $\g_{0,\zeta}$, i.e., for stabilisers of $G_0$-regular elements
in $\Omega_1^{(j)}$.

\textbullet \ For $\mathfrak Z^{(j)}$, we have $\codim_{\ka^*}\mathfrak Z^{(j)}=
1+\codim_{\pi^{-1}(\bar\xi)}\pi^{-1}(\bar\xi)_{sg}=
1+\codim_{\gN(\h_1)} \gN(\h_1)_{sg}$. Hence $\mathfrak Z^{(j)}$ is irrelevant for the 
{\sl codim}--$3$ property whenever $\codim_{\gN(\h_1)} \gN(\h_1)_{sg}\ge 2$. If 
$\codim_{\gN(\h_1)} \gN(\h_1)_{sg}=1$, then a more accurate analysis of $\mathfrak Z^{(j)}$ is
needed. 

Because $\rk(\h,\h_0)=1$ and $\sath$ is a sub-diagram of $\sat$ 
(Proposition~\ref{prop:sub-sym}), $\sath$ contains all black nodes from $\sat$ and 
either a unique white node or a unique pair of white nodes joined by an arrow.
We consider all the possibilities in turn.

{\sf (I)} \  {\sl 
$\sath$ contains a unique white node and this node is trivial in\/ $\sat$.} 
\\
In other words,  $\sath$ is a {\bf disjoint} union of 
one white node and the subdiagram of all black nodes. In this case, $\h=\tri\dotplus [\rr,\rr]$ and 
$[\rr,\rr]\subset \h_0$.  That is, the $\BZ_2$-grading of $\h$ is determined by the unique non-trivial
$\BZ_2$-grading of $\tri$.
Here $\dim\h_1=2$ and $\gN(\h_1)$ is the union of two lines in $\h_1$ (the ``coordinate cross'').
Using \eqref{eq:struct-fibre}, we obtain that 
$G_0{\cdot}\xi$ is of codimension $1$ in $\pi^{-1}(\bar\xi)$ (and $\pi^{-1}(\bar\xi)$ also contains
two $G_0$-orbits of regular elements). Therefore, the union of all closed $G_0$-orbits in 
$\Omega_1^{(j)}$ yields a subvariety $Z^{(j)}$, of codimension~1 in 
$\Omega_1^{(j)}$, hence of codimension~2 in $\g_1^*$. Let us prove that 
$\mathfrak Z^{(j)}=\psi^{-1}(Z^{(j)})$ does not contain $K$-regular elements.
Indeed, in this case $\g_{0,\xi}$ is reductive and $\dim\g_{0,\xi}=\dim\rr+1$. Hence
$\g_{0,\xi}=\rr\dotplus\te_1$. It follows that
$\ind(\g_{0,\xi})=\ind\rr+1=\rk\g-\rk(\g,\g_0)+1$. Now, using Eq.~\eqref{eq:min-codim} and the fact 
that $\codim_{\g_1^*}(G_0{\cdot}\xi)=\rk(\g,\g_0)+1$, we obtain that 
$\dim \ka_\eta\ge \rk\g+2=\ind\ka+2$ for all $\eta\in\mathfrak Z^{(j)}$.
Thus, here $\ka$ does {\bf not} have the {\sl codim}--$3$ property.

{\sf (II)}\ \ {\sl 
$\sath$ has a unique white node which is adjacent to
a black node}. 
\\
To realise the structure of $\pi^{-1}(\bar\xi)\simeq G_0\times_{G_{0,\xi}}\gN(\h_1)$, we may only 
consider the connected component of $\sath$ that contains the white node.
That is, we  look at $\gN(\h_1)$ for the $\BZ_2$-gradings of rank one of 
{\bf simple} Lie algebras such that the Satake diagram has no arrows. The corresponding list consists 
of the following symmetric pairs $(\h,\h_0)$:
\begin{center}
$\boldsymbol I_1$: \ $(\son, \mathfrak{so}_{n-1})$, $n\ge 5$; \ 
$\boldsymbol I_2$: \ $(\spn,\mathfrak{sp}_{2n-2}\dotplus\mathfrak{sp}_2)$, $n\ge 2$; \ 
$\boldsymbol I_3$: \ $(\eus {F}_{4}, \mathfrak{so}_{9})$.
\end{center}
[Note that the case $(\boldsymbol I_1,\,n=5)$ coincides with $(\boldsymbol I_2,\, n=2)$, and 
$(\boldsymbol I_1,\,n=6)$ is also equal to $(\mathfrak{sl}_4, \mathfrak{sp}_{4})$.]
For all three cases, we have $\codim_{\gN(\h_1)} \gN(\h_1)_{sg} \ge 2$.
Hence $\mathfrak Z^{(j)}$ is irrelevant for the {\sl codim}--$3$ property for $\ka$ and,
as explained above, we only have to verify
the {\sl codim}--$2$ property for $\g_{0,\zeta}$.
Without loss of generality, we may assume that
$\zeta=\xi+y$, where $y\in\gN(\h_1)$ is $H_0$-regular. Then $\g_{0,\zeta}=\z_0\dotplus
\h_{0,y}$, see Eq.~\eqref{eq:iff-codim-n}, and it suffices to check the {\sl codim}--$2$ property for 
$\h_{0,y}$.
Because $\rk(\h,\h_0)=1$, we have $\dim\h_{1,y}=1$. 
Hence 
$\h_y=\h_{0,y}\dotplus \langle y\rangle$, and we can work with either $\h_y$ or $\h_{0,y}$
in the above cases $(\boldsymbol I_1$-$\boldsymbol I_3)$.

For $\h=\sln$ or $\spn$,  {\bf all} the centralisers $\h_v$
have the {\sl codim}--$2$ property~\cite[Sect.\,3]{ppy},
which covers cases $(\boldsymbol I_1,\,n\le 6)$ and $\boldsymbol I_2$.
For $(\boldsymbol I_1,\,n\ge 7)$, an explicit description of centralisers shows that
$\h_{0,y}$ is a $\BZ_2$-contraction  of  $\mathfrak{so}_{n-2}$, and all 
$\BZ_2$-contractions have the {\sl codim}--$2$ property.
For $\boldsymbol I_3$, we have $\h_{0,y}=\eus G_2\ltimes \bbk^7$, which is a contraction of
$\mathfrak{so}_7$, 
and the verification is straightforward.

Thus, the strata occurring in part~{\sf (II)} provide no obstacles for the {\sl codim}--$3$ property.

{\sf (III)}\ \ {\sl  
$\sath$ has a unique pair of nodes joined by an arrow}.
\ 
There are three possibilities for the connected component of $\sath$ containing these two 
white nodes.

({\sf III}-{\it a})  {\sl There are black nodes between the white ones}. The corresponding connected 
component of $\sath$ looks like item 1) in Table~\ref{table:all} with $k=1$ and $n\ge 4$. Here again
$\codim_{\gN(\h_1)} \gN(\h_1)_{sg} \ge 2$ and the  argument goes through as in part {\sf (II)} 
of the proof.  An essential point is that  $\h_{0,y}$ has the {\sl codim}--$2$ property, because it is a 
$\BZ_2$-contraction of $\mathfrak{gl}_{n-2}$.

({\sf III}-{\it b}) {\sl The two white nodes are {\bf not} adjacent in the Dynkin diagram of $\g$, and there is 
no black nodes between them}. Such a sub-diagram occurs only for the symmetric pair
$(\mathfrak{sl}_{2n+1}, \sln\dotplus\slno\dotplus\te_1)$ ($n\ge 2$) and $\sath$ is just
\begin{picture}(45,20)(-5,5)
\setlength{\unitlength}{0.017in}
\multiput(5,5)(20,0){2}{\circle{5}}
{\color{blue}\put(15,10){\oval(20,10)[t]}
\multiput(5,10)(20,0){2}{\vector(0,-1){2}}
}
\end{picture}. Here $\h=\tri\dotplus\tri$ and $\h_0$ is the diagonal in $\h$
(cf. Example~\ref{ex:diagonal-satake}). In this case, we have $\dim\gN(\h_1)=2$ and 
$\gN(\h_1)_{sg}=\{0\}$. Hence $\codim_{\gN(\h_1)} \gN(\h_1)_{sg} = 2$.
If $y\in \gN(\h_1)\setminus \{0\}$, then $\h_{0,y}$ is $1$-dimensional and abelian. Therefore,
$\h_{0,y}$ has the {\sl codim}--$2$ property.

({\sf III}-{\it c}) {\sl The two white nodes are adjacent in the Dynkin diagram}, i.e., $\sath$ contains a 
connected component
\begin{picture}(45,20)(-5,5)
\setlength{\unitlength}{0.017in}
\multiput(5,5)(20,0){2}{\circle{5}}
\put(8,5){\line(1,0){14}}
{\color{blue}\put(15,10){\oval(20,10)[t]}
\multiput(5,10)(20,0){2}{\vector(0,-1){2}}
}
\end{picture}.
Again, this means  that $(\g,\g_0)=(\mathfrak{sl}_{2n+1}, \sln\dotplus\slno\dotplus\te_1)$, 
with $\rk(\g,\g_0)=n$, and there is no black nodes at all.

Here $\h=\mathfrak{sl}_3$, $\h_0=\mathfrak{gl}_2$, and $\gN(\h_1)$ consists of four $H_0$-orbits of
dimension $3,2,2,0$. Hence  $\codim_{\gN(\h_1)} \gN(\h_1)_{sg} = 1$,
$\codim_{\pi^{-1}(\bar\xi)}\pi^{-1}(\bar\xi)_{sg}= 1$, and $\mathfrak Z^{(j)}$ is
of codimension~$2$ in $\ka^*$, which resembles the bad case of part~{\sf (I)}. But, unlike that 
situation, here $\mathfrak Z^{(j)}$ does contain $K$-regular elements. Recall that 
$\xi\in\pi^{-1}(\bar\xi)\cap\ce$, $\h=[\g_\xi,\g_\xi]$, and $\z$ is the centre of $\g_\xi$. 
Since $\dim \gN(\h_1)=3$, the orbit $G_0{\cdot}\xi$ is of codimension~$3$ in $\pi^{-1}(\bar\xi)$.
Therefore, $\g_\xi=\mathfrak{sl}_3\dotplus\te_{2n-2}$ and 
$\g_{0,\xi}=\mathfrak{gl}_2\dotplus\te_{n-1}$, i.e., $\z=\te_{2n-2}$ and $\z_0=\te_{n-1}$.
Let $\nu\in \gN(\h_1)$ belong to a two-dimensional $H_0$-orbit. Then 
$G_0{\cdot}(\xi+\nu)$ is of codimension~$1$ in $\pi^{-1}(\bar\xi)$, i.e., 
$\codim_{\g_1^*}G_0{\cdot}(\xi+\nu)=n+1$.
Here $\h_{0,\nu}$ is the $2$-dimensional non-abelian Lie algebra, hence $\ind \h_{0,\nu}=0$.
Since 
$\g_{0,\xi+\nu}=\z_0\dotplus \h_{0,\nu}=\te_{n-1}\dotplus \h_{0,\nu}$, we have 
$\ind \g_{0,\xi+\nu}=(n-1)+\ind \h_{0,\nu}=n-1$. Using Eq.~\eqref{eq:min-codim}, we conclude that
$\psi^{-1}(G_0{\cdot}(\xi+\nu))$ contains $K$-orbits of codimension $(n+1)+(n-1)=2n=
\rk\g=\ind\ka$.

Finally, we notice that if $y$ belongs to the $3$-dimensional $H_0$-orbit in $\gN(\h_1)$, then
$\h_{0,y}$ is $1$-dimensional and abelian. Therefore,
$\h_{0,y}$ has the {\sl codim}--$2$ property, which guarantees the ``good behaviour'' of $\mathfrak Y^{(j)}$.

Thus, the strata occurring in part~{\sf (III)} provides no obstacles for the {\sl codim}--$3$ property.

{\color{MIXT}$(\Xi_2)$-case.} \ 
Here we only have to prove that $(\Xi_2)_{sg}$ has smaller dimension than $\Xi_2$.
Since
\[
   \Xi_2=\g_0^*\times\Omega_2=\bigsqcup_{\bar\xi\in\bar\Omega_2}(\g_0^*\times \pi^{-1}(\bar\xi)) 
\]
and each irreducible component of $\pi^{-1}(\bar\xi)$ contains $G_0$-regular elements, we 
conclude using Eq.~\eqref{eq:fibre-soderzhit} that the set of $K$-regular elements is dense in 
$\Xi_2$. Thus, the codimension~$2$ strata  cause no harm with respect to the 
{\sl codim}--$3$ property.

Thus, the {\sl codim}--$3$ property for $\ka$ fails if and only if $\sat$ has a 
trivial node.
\end{proof}

\begin{rmk}           \label{rem:curious-cod3}
The  symmetric pair $(\g,\g_0)=(\mathfrak{sl}_{2n+1}, \sln\dotplus\slno\dotplus\te_1)$ 
provides a curious and unique example such that the complement of the set of $G_0$-regular
points in $\g_1^*$ contains a component of codimension two in $\g_1^*$, but nevertheless 
$\ka$ possesses the {\sl codim}--$3$ property. 
\end{rmk}

\section{Concluding remarks and open problems}
\label{sect:finish}

\noindent
We have given a description of maximal commutative subalgebras of the Poisson algebra
$\cs(\ka)=\bbk[\ka^*]$ in the following two cases:

1)  \ If $(\g,\g_0)$ is $\gN$-regular, i.e., $\sat$ contains no black nodes, then $\cs(\ka)^{\g_1}=\bbk[\ka^*]^{\g_1}$
is a maximal commutative subalgebra (Theorem~\ref{thm:N-reg}).

2) \ If  $\sat$ has no trivial nodes, then the argument shift method provides maximal commutative
subalgebras $\cf_\xi(\zk)$ for $\xi\in\ka^*_{reg}$ (Theorems~\ref{thm:cod3}(ii) and \ref{thm:main}). 

The list of remaining symmetric pairs with $\g$ simple consists of the following items:
\begin{enumerate}
\item \ $(\mathfrak{so}_{4n}, \mathfrak{gl}_{2n})$, $n\ge 2$;
\item \ $(\mathfrak{so}_{n+m}, \mathfrak{so}_{m}\dotplus \son)$, $n\ge m>1,\ n-m\ge 3$;
\item \ $(\eus E_7, \eus E_6\dotplus\te_1)$;
\item \ $(\eus E_7, \mathfrak{so}_{12}\dotplus\tri)$;
\item \ $(\eus E_8, \eus E_7\dotplus\tri)$.
\end{enumerate}
For these symmetric pairs, no maximal commutative subalgebras of $\cs(\ka)$ is
known. Of course, $\cf_\xi(\zk)$ is always  of maximal dimension,
since $\ka$ has the {\sl codim}--$2$ property. But the maximality can fail; it does fail 
for the involutions of maximal rank and
$\xi\in \g_1^*\cap\ka^*_{reg}$, see \cite[Example\,4.1]{do}.

On the other hand, there are symmetric pairs, where both above constructions apply. For 
$(\mathfrak{sl}_{2n+1}, \sln\dotplus\slno\dotplus\te_1)$ and $(\h\dotplus\h,\Delta_\h)$, the Satake 
diagram contains neither black nor trivial nodes. Here the maximal commutative
subalgebras $\cf_\xi(\zk)$ and $\cs(\ka)^{\g_1}$ are quite different. Indeed, both algebras are graded polynomial, but the degrees of free homogeneous generators differ considerably. 

For any $\xi\in\g^*_{reg}$, the maximal commutative subalgebra $\cf_\xi(\zg)$ can be lifted to 
$\eus U(\g)$  \cite[Theorem\,3.14]{FFT10} (for the regular {\sl semisimple\/} elements 
$\xi$, the result was earlier obtained in \cite{ryb06}). 

\begin{qtn} 
(1) \ Is it possible to quantise (=\,lift to $\eus U(\ka)$) commutative subalgebras of the form  $\cf_\xi(\zk)$, 
$\xi\in\ka^*_{reg}$, \ for\/ $\ka=\g_0\ltimes\g_1$? 
\\  \indent
(2) \ Suppose that\/ $\ka$ has the {\sl codim}--$3$ property. Is it true that any maximal commutative subalgebra of maximal dimension in $\eus S(\ka)$ is necessarily polynomial?
\end{qtn}
It might also be interesting to have some structure results on maximal commutative 
subalgebra of maximal dimension for more general Lie algebras.

\vskip.7ex
{\small {\bf Acknowledgements.}
Both authors were partially supported by the DFG priority programme SPP 1388 "Darstellungstheorie".}

\end{document}